\DeclareMathOperator{\cok}{coker}
\newcommand{\Shyr}{\text{Shyr}}
\newcommand{\Ono}{\text{Ono}}
\newcommand{\fp}{\frak{p}}
\newcommand{\fP}{\frak{P}}
\newcommand{\fg}{\frak{g}}
\newcommand{\Hom}{\text{Hom}}
\newcommand{\Sp}{\text{Spec} \,}
\newcommand{\sep}{\text{sep}}
\chardef\bslash=`\\ 
\newtheorem{theorem}{Theorem}[section]
\newtheorem*{ack*}{Acknowledgments}
\newtheorem{prop}[theorem]{Proposition}
\newtheorem{lem}[theorem]{Lemma}
\newtheorem{cor}[theorem]{Corollary}
\theoremstyle{definition}
\newtheorem{defn}[theorem]{Definition}  
\newtheorem{remark}[theorem]{Remark}
\newtheorem{example}[theorem]{Example}
\numberwithin{equation}{section}
\newtheorem*{maintheorem*}{Main Theorem}
\theoremstyle{definition}
\newtheorem{definition}{Definition}
\newtheorem{notation}[theorem]{Notation}
\newcommand{\CO}{\mathcal{O}}
\renewcommand{\sectionmark}[1]{}
\newcommand{\N}{\mathcal{N}}
\newcommand{\sh}{\text{sh}}
\newcommand{\sm}{\text{sm}}
\newcommand{\tor}{\text{tor}}
\newcommand{\iy}{\infty}
\newcommand{\bk}{\bigskip}
\newcommand{\fc}{\frac}
\newcommand{\G}{\Gamma}
\newcommand{\s}{\sigma}
\newcommand{\om}{\omega}
\newcommand{\Om}{\Omega}
\newcommand{\BC}{\Bbb{C}}
\newcommand{\BG}{\Bbb{G}}
\newcommand{\BF}{\Bbb{F}}
\newcommand{\C}{\mathcal{C}}
\newcommand{\BQ}{\Bbb{Q}}
\newcommand{\BR}{\Bbb{R}}
\newcommand{\BZ}{\Bbb{Z}}
\newcommand{\BA}{\Bbb{A}}
\newcommand{\T}{\mathcal{T}}
\newcommand{\MS}{\mathcal{S}}
\renewcommand{\a}{\alpha}
\begin{document}

\date{}

\title{The Discriminant of an Algebraic Torus}

\author{Rony A. Bitan}

\thanks{A part of a Ph.D. Thesis supervised by B. Kunyavski\u\i\ at the Department of Mathematics, 
Bar-Ilan University, Ramat-Gan, Israel. Partially supported by the ISF center of excellency grant 1438/06.}

\address{ R. Bitan, Department of
Mathematics, Technion-Israel Institute of Technology -- Technion City, Haifa 32000, Israel}
\email{rony.bitan@gmail.com}

\maketitle

\baselineskip 20pt
\setcounter{equation}{0}
\pagestyle{plain}
\pagenumbering{arabic}

\begin{abstract}
For a torus $T$ defined over a global field $K$, 
we revisit an analytic class number formula obtained by Shyr in the 1970's 
as a generalization of Dirichlet's class number formula. 
We prove a local-global presentation of the quasi-discriminant of $T$, 
which enters into this formula, in terms of cocharacters of $T$. 
This presentation can serve as a more natural definition of this invariant.
\end{abstract}

\setcounter{page}{1}

\section*{Introduction}

The well-known class number formula expresses the class number of a number field $K$ 
in terms of other arithmetic invariants of $K$:
$$ h = \fc{ w |\Delta|^{1/2} \rho }{ 2^r (2\pi)^t R} $$  
where $w$ is the number of roots of unity in $K$, 
$\Delta$ is the discriminant of $K$, 
$\rho$ is the residue of the Dedekind Zeta-function of $K$ at $s=1$, 
$R$ is the regulator of $K$ and 
$r$ (resp. $t$) is the number of real (resp. pairs of complex) embeddings of $K$. 

\bk

In the early 1960's, T. Ono defined in \cite{Ono} analogues of these invariants for 
algebraic tori defined over both kinds of global fields, 
namely, the number field case (denoted by case~(N)) 
and the case of algebraic function field in one variable 
over a finite field of constants $\BF_q$ (denoted by case~(F)). 
One of these invariants is the \emph{quasi-discriminant}.  
As in the case of the discriminant of a global field, 
the quasi-discriminant of $T$ is the volume 
of the fundamental domain of the maximal compact subgroup of $T(A_K)/T(K)$   
-- where $A_K$ is the adele ring -- with respect to the Tamagawa measure. 
J. M. Shyr gave in \cite{Shyr1} a similar definition 
in the case of algebraic $\BQ$-tori. 
In this new construction, other arithmetic invariants are taken from Ono's definition. 
This led him to a relation 
which can be viewed as an ``analogue of the class number formula" for algebraic $\BQ$-tori. 
This relation can be generalized to tori defined over any global field $K$, as follows:
\begin{equation} \label{Shyr global invariant}
c_T^\Shyr := |\Delta_K|^{-d/2} C_\iy \prod\limits_\fp L_\fp(1,\chi_T) \cdot \om_\fp(T_\fp(\CO_\fp)) 
          = \fc{\rho_T \tau_T w_T}{h_T R_T} 
\end{equation}
where $C_\iy$ is an archimedean factor 
and $\Delta_K$ is the discriminant of $K$, $d=\dim T$. 
For any prime $\fp$ of $K$, $K_\fp$ is the complete localization of $K$ at $\fp$, 
$T_\fp = T \otimes K_\fp$, $T_\fp(\CO_\fp)$ is the maximal compact subgroup of $T_\fp(K_\fp)$, 
$L_\fp(s,\chi_T)$ is the local Artin $L$-function 
and $\om_\fp$ is some Haar measure on $T_\fp(K_\fp)$.  
$\rho_T$ is the residue of the global Artin $L$-function at $s=1$,  
$w_T$ is the cardinality of the torsion part of the group of units of $T$, 
$h_T$ is the class number of $T$, 
$R_T$ is the regulator of $T$ (equal to 1 in case(F)),
and $\tau_T$ is the Tamagawa number of $T$. 
This work is described in the fourth Section and on the Appendix. 
We call $c_T^\Shyr$ the \emph{Shyr invariant}. 
   
\bk

Locally, let $K$ be a henselian local field 
with a ring of integers $\CO_\fp$ and a finite residue field $k$. 
Let $T$ be an algebraic $K$-torus splitting 
over a finite Galois extension $L/K$ with Galois group $\G$. 
We investigate the invariant
$L_\fp(1, \chi_T) \cdot \om_\fp(T(\CO_\fp))$.    
Ono defined the group $T(\CO_\fp)$ using the dual $\G$-module, namely its group of characters. 
In order to measure it, we would like to describe it as a group of $\CO_\fp$-points of some integral model. 
In the first Section we exhibit the properties and relation between two integral models of $T$, 
namely, the standard integral model $X$ defined by V. E. Voskresenskii (which is of finite type), 
and the well-known N\'eron-Raynaud integral model $\T$, (which is locally of finite type). 
After applying a smoothening process to $X$, the identity components of both models coincide. 
Let $\Phi_T$ be the $k$-scheme of the group of components of the reduction of $\T$ modulo $\fp$. 
In the second Section we prove that:
$$ L_\fp(1, \chi_T) \cdot \om_\fp(T(\CO_\fp)) = |\Phi_T(k)_\tor|. $$
In the third Section, we use a construction of Kottwitz in \cite{Ko}, 
to prove an isomorphism of $k$-schemes
$ \Phi_T(k) \cong (X_\bullet(T)_I)^{\left<F\right>} $ 
where $X_\bullet(T)$ is the cocharacter group of $T$, $I$ is the inertia subgroup of $\G$ and 
$F$ is the Frobenuis automorphism generating $\G / I$. 
This isomorphism gives us another computation of the local component in Shyr's invariant:
$$ L_\fp(1,\chi_T) \cdot \om_\fp(T_\fp(\CO_\fp)) = |\ker\left(1-F|X_\bullet(T)_I \right)_\tor|. $$
Globally, together with the infinite place information, 
we prove in the fourth Section our main Theorem, 
which can be viewed as a local-global result, 
in spirit of the Artin-Hasse conductor-discriminant formula:
\begin{maintheorem*} 
$$ c_T^\Shyr = |\Delta_K|^{-d/2} C_\iy \prod\limits_\fp \left| \ker \left(1-F_\fp|X_\bullet(T_\fp)_{I_\fp} \right)_\tor \right| 
             = \fc{\rho_T \tau_T w_T }{ h_T R_T }. $$ 
\end{maintheorem*}

From this formula one can see that the Shyr invariant 
can be decomposed into the product of the ``arithmetic-geometric part" 
(related to the discriminant of the ground field) 
and the ``algebraic" part (reflecting the Galois action on the cocharacters).

\begin{ack*}
\emph{I thank B. Kunyavski\u\i\ for his guidance and support.
I also thank Q. Liu, T. Richarz and the referee for helpful comments.}
\end{ack*}

\bk

\section{Integral models of algebraic tori}

Let $K$ be any field. An \emph{algebraic torus} $T$ is an algebraic $K$-group 
such that $T \otimes_K L \cong \BG_{m,L}^d$ for some finite Galois extension $L/K$ 
where $\BG_m$ is the multiplicative group and $d$ is the dimension of $T$. 
The smallest among such extensions is called the \emph{splitting field} of $T$. 
We write $T \in \C(L/K)$. 
We denote by $ X^\bullet(T) = \Hom(T \otimes_K L,\BG_{m,L}) $ the group of characters of $T$. 
For any intermediate field $K \subseteq F \subseteq L$, $X^\bullet(T)_F$ 
is the sublattice of characters defined over $F$. 

\bk

Now let $K$ be a local field which is the complete localization of a global field 
with respect to a prime $\fp$. 
We denote by $\CO_\fp$ the ring of integers of $K$ and by $U_\fp = \CO_\fp^\times$ its subgroup of units. 
Let $k = \CO_\fp / \fp$ be the residue field with cardinality $q$. 
Let $T \in \C(L/K)$ and denote by $\G = Gal(L/K)$ the Galois group and by $I$ its inertia subgroup. 

\bk

An $\CO_\fp$-\emph{integral model} of $T$ is an $\CO_\fp$-scheme $M$ 
whose generic fiber $M \otimes_{\CO_\fp} K$ is isomorphic to $T$. 
The \emph{reduction modulo $\fp$} of $M$ 
is its special fiber $M_\fp = M \otimes_{\CO_\fp} k$. 
Its identity component $M^0$ is an open subscheme of $M$ 
which is the identity component of the special fiber, 
i.e. such that $M^0 \otimes k = (M \otimes k)^0$ (see \cite[p.154]{BLR}). 

\bk

T. Ono defined $T(\CO_\fp)$ as the maximal compact subgroup of $T(K)$ 
with respect to the $\fp$-adic topology. 
This was done using the group of characters (see \cite[2.1.3]{Ono}):
$$ T(\CO_\fp) = \{x \in T(K) \ : \ \chi(x) \in U_\fp \ \ \forall \chi \in X^\bullet(T)_K \}. $$
However, in order to measure this group with respect to some local measure, 
this description may not be enough. 
We would like to write $T(\CO_\fp)$ as the group of $\CO_\fp$-points of some group scheme. 
Toward this end, we consider two integral models of $T$.

\subsection{The standard integral model} \label{standard model}

The following construction is due to V. E. Voskresenskii 
and can be found in \cite[\S1]{Pop}. 
Notations are as above and suppose that $(L:K)=n$ and $\dim T = d$. 
Then $X^\bullet(T)$ is spanned as a $\BZ$-lattice by a basis $\{\chi_i\}_{i=1}^d$. 
The Hopf algebra $B = (L[X^\bullet(T)])^\G$ is the coordinate ring of $T$. 
The isomorphism $T \otimes_K L \cong \BG_m^d$ 
is equivalent to the isomorphism of $L$-algebras: $B \otimes_K L = LB \cong L[X^\bullet(T)]$. 
Let $\{\om_i\}_{i=1}^n$ be an integral basis of $L$ over $K$. 
Then 
$$ L[X^\bullet(T)] = B\om_1 \oplus ... \oplus B\om_n. $$ 
Thus there are linear combinations:
$$      \chi_i = x_i^{(1)}\om_1 + ... + x_i^{(n)}\om_n, \ \ x_i^{(j)} \in B $$
$$ \chi_i^{-1} = y_i^{(1)}\om_1 + ... + y_i^{(n)}\om_n, \ \ y_i^{(l)} \in B. $$

\begin{definition}
$A(X^\bullet(T)) = \CO_\fp[x_i^{(j)},y_m^{(l)}]$ is a Hopf $\CO_\fp$-algebra.
The group $\CO_\fp$-scheme $X = \Sp A(X^\bullet(T))$ 
is called the \emph{standard integral model} of $T$. 
\end{definition} 

\begin{remark} (\cite[\S~10.3]{Vos}) \label{properties of standard model}
Being obtained from a linear representation, 
$X$ is of finite type over $\CO_\fp$, and is reduced and faithfully flat. 
Further, $X(\CO_\fp) = T(\CO_\fp)$ is the maximal compact subgroup of $T(K)$ 
with respect to the $\fp$-adic topology. 
\end{remark}

\subsection{The N\'eron-Raynaud integral model and relations to the standard model} \label{models properties}

Let $K^\sh$ be the strict henselization of $K$ and $\CO_\fp^\sh$ be its ring of integers. 
We refer to the N\'eron-Raynaud (NR for short) model $\T$ of $T$ 
which is locally of finite type over $\CO_\fp^\sh$, 
as defined in \cite[ch.~10]{BLR} 
and satisfying $\T(\CO_\fp^\sh) = T(K^\sh)$.

\begin{remark} \label{lowering to local field}
A local ring is strictly henselian if its residue field is separably closed. 
In our cases (N) and (F) the residue field of any complete localization is perfect, 
thus $K^\sh$ is the maximal unramified extension of $K$ 
and it is a Galois over $K$. 
Let $\G_\sh = Gal(K^\sh/K)$. 
The $\G_\sh$-invariant subgroup of $\T(\CO_\fp^\sh) = T(K^\sh)$ is then $\T(\CO_\fp) = T(K)$. 
Indeed, as $\T$ is separated, 
the canonical map $\T(\CO_\fp) \to T(K)$ is injective.
It is also surjective by the universal property of the NR-model. 
\end{remark}

The following construction can be found in \cite{VKM}. 
Let $T_L = T \otimes L \cong \BG_{m,L}^d$. 
Let $\CO_\fP$ be the ring of integers of $L$ 
and let $\T_L$ be the NR-model of $T_L$ defined over it. 
The $\CO_\fp$-scheme $\MS = R_{\CO_\fP/\CO_\fp}(\T_L)$ 
obtained by the Weil restriction of scalars, is the NR-model of $R = R_{L/K}(T_L)$. 
Its identity component is $\MS^0 = R_{\CO_\fP / \CO_\fp}(\BG_{m,\CO_\fP}^d)$. 
Let $\N$ be the schematic closure of $T$ in $\MS$ 
induced by the canonical embedding $T \to R$. 
The standard $\CO_\fp$-model $X$ of $T$  
is isomorphic to $\N \cap \MS^0$ 
(see \cite[Prop.~6]{VKM}, 
the proof there is for $p$-adic fields 
but the arguments are valid also in case (F)).

\begin{lem} 
$X^0 = \N^0. $
\end{lem}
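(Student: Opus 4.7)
The plan is to use the identification $X \cong \N \cap \MS^0$ established immediately above, which exhibits $X$ as an open subgroup scheme of $\N$, and then reduce the equality of identity components to an elementary topological statement on the special fiber.

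First, I would check that $X \hookrightarrow \N$ is an open immersion of $\CO_\fp$-group schemes with the same generic fiber $T$. Since $\MS^0$ is, by definition, an open subgroup scheme of $\MS$, and $\N$ is a closed subgroup scheme of $\MS$ (being the schematic closure of the generic subgroup $T \subseteq R$ in the flat group scheme $\MS$), the intersection $X = \N \cap \MS^0 = \N \times_\MS \MS^0$ is an open subgroup scheme of $\N$; on the generic fibers $X$ and $\N$ both restrict to $T$.

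Next, I would show $X^0_\fp = \N^0_\fp$ at the level of special fibers. The inclusion $X_\fp \hookrightarrow \N_\fp$ is an open subgroup scheme of a $k$-group scheme, and any such open subgroup scheme is automatically closed: its complement decomposes as a disjoint union of cosets $gX_\fp$, each open by translation. Hence $X_\fp$ is clopen in $\N_\fp$; since $X_\fp$ contains the identity, and $\N^0_\fp$ is the connected component of the identity, connectedness forces $\N^0_\fp \subseteq X_\fp$. Then $\N^0_\fp$ is connected and contains the identity of $X_\fp$, so $\N^0_\fp \subseteq X^0_\fp$; conversely $X^0_\fp$ is connected in $\N_\fp$ and contains the identity, so $X^0_\fp \subseteq \N^0_\fp$. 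Equality follows.

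Finally, I would conclude by uniqueness. Both $X^0$ and $\N^0$ are, by definition, the open subschemes of $\N$ obtained by removing from the special fiber the non-identity components; as subsets of $|\N|$ they both equal $\N_\eta \cup \N^0_\fp = \N_\eta \cup X^0_\fp$, so they define the same open subscheme of $\N$, and thus $X^0 = \N^0$.

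The only real obstacle is the clopen-plus-connectedness step on the special fiber; the rest is formal. A minor point to be careful about is that $\MS$, and hence $\N$, is only locally of finite type, so $\N_\fp$ may have infinitely many components; but the identity component is still a bona fide clopen connected neighborhood of the identity in a group scheme locally of finite type over a field, and the coset argument goes through unchanged.
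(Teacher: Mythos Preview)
Your argument is correct and follows the same underlying idea as the paper, just with much more detail. The paper's own proof is the single line $\N^0 = (\N \cap \MS^0)^0 = X^0$, which relies implicitly on the fact you unpack explicitly: an open subgroup scheme (here $X = \N \cap \MS^0 \subseteq \N$) shares the same identity component as the ambient group scheme, because the identity component, being connected on the special fiber and containing the unit section, must lie inside any clopen subgroup. Your careful treatment of the special fiber and the coset argument is a faithful expansion of this implicit step.
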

\begin{proof}
$ \N^0  = (\N \cap \MS^0)^0 = X^0. $
\end{proof}

The schemes $\N$ and $X$ are not necessarily smooth, 
i.e., their special fibers may not be reduced. 
To achieve the desired smooth NR-model $\T$, 
one should apply the \emph{smoothening process} (see \cite[Chap.~3]{BLR}). 
It is sufficient to control the defect of smoothness over $X = \N \cap \MS^0$ 
(\cite[Prop.~10.1/4]{BLR}). 
Thus the equality of the identity components of the two models is preserved. 
We denote the obtained \emph{smooth} standard model by $X_\sm$. 
As the ring representing $X$ is Notherian, this process consists of blowing up finitely many maximal ideals 
and $X_\sm$ remains of finite type. 
Moreover, by definition, the generic fibers of $X_\sm$ and $X$ are isomorphic.

\begin{cor} \label{equality of identity components}
$X_\sm^0 = \T^0$.
\end{cor}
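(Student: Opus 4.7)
The proof rests on the preceding Lemma, $X^0 = \N^0$, together with the observation just recorded that by \cite[Prop.~10.1/4]{BLR}, the smoothening process producing $\T$ from $\N$ may be localized on the open subscheme $X = \N \cap \MS^0$. My plan is to show that passage to the identity component commutes with smoothening, so that smoothening $X$ and then taking the identity component coincides with taking the identity component of $\T$.

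First, using the cited Proposition, I would note that the defect of smoothness of $\N$ is contained in $X$, so the finite sequence of blow-ups constituting the smoothening can be performed with centers entirely inside $X$. These blow-ups leave $\N \setminus X$ untouched, hence the open immersion $X \hookrightarrow \N$ lifts canonically to an open immersion $X_\sm \hookrightarrow \T$, making $X_\sm$ an open subgroup scheme of $\T$ containing the identity section. The identity component $X_\sm^0$ is then the open subscheme of $X_\sm$ whose special fiber is the connected component of the identity in $(X_\sm)_\fp$, and similarly for $\T^0$; since the generic fibers and the identity sections match, this yields $X_\sm^0 \subset \T^0$. For the reverse inclusion, the Lemma gives $\N^0 = X^0$, and since the identity component of a smoothened group scheme equals the smoothening of the identity component, one obtains $\T^0 = (\N^0)_\sm = (X^0)_\sm = X_\sm^0$.

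The step I expect to be the most delicate is making rigorous the claim that the smoothening of $\N$ restricted to $X$ is literally $X_\sm$, i.e., that smoothening commutes with passage to the open subscheme $X \subset \N$. This amounts to unwinding the inductive construction in \cite[Chap.~3]{BLR} and verifying that each blow-up used in the smoothening of $\N$ has its center inside $X$, so that carrying it out on $X$ alone produces the same scheme as restricting the global blow-up. Once this bookkeeping is handled, the conclusion $X_\sm^0 = \T^0$ follows from the connected-component considerations above and the equality $X^0 = \N^0$.
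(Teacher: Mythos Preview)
Your proposal is correct and follows essentially the same approach as the paper, which gives no explicit proof beyond the sentence ``Thus the equality of the identity components of the two models is preserved'' in the paragraph preceding the Corollary. You have simply fleshed out that sentence using the Lemma $X^0 = \N^0$ and the localization of the smoothening to $X$ via \cite[Prop.~10.1/4]{BLR}, which is exactly what the paper intends.

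One small remark: your reverse inclusion invokes the principle ``the identity component of a smoothened group scheme equals the smoothening of the identity component,'' which you state but do not justify. A slightly cleaner way to close the argument is to observe that once $X_\sm \hookrightarrow \T$ is an open subgroup scheme, the special fiber $(X_\sm)_\fp$ is an open subgroup of $\T_\fp$ and therefore automatically contains $\T^0_\fp$; this gives $\T^0 \subseteq X_\sm$, whence $\T^0 = X_\sm^0$ directly. This avoids appealing to a commutation statement that would itself require the kind of bookkeeping you flag as delicate.
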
 

\bk

\section{Reductions and local volume computations}

\subsection{Rational points of the group of components}

Denote by $i: \Sp k \to \Sp \CO_\fp$ the canonical closed immersion of the special point. 
We call the $k$-scheme $\Phi_T = i^*(\T / \T^0)$ the \emph{group of components} of $\T$. 
There is an exact sequence of $k$-schemes:
$$ 1 \to \T^0_\fp \to \T_\fp \to \Phi_T \to 1, $$
where $\T_\fp = i^*(\T)$ and $\T_\fp^0 = i^*(\T^0)$. 
Let $l$ be the residue field of $L$ and let $\fg = Gal(l/k)$. 
Since $k$ is finite and $\T_\fp^0$ is affine and connected, 
by Lang's Theorem (see \cite[Ch.~VI,Prop.~5]{Ser}), 
$H^1(\fg,\T_\fp^0(l))$ is trivial implying the exactness of: 
\begin{equation} \label{k rational points of Phi}
1 \to \T_\fp^0(k) \to \T_\fp(k) \stackrel{\varphi}{\rightarrow} \Phi_T(k) \to 1.
\end{equation} 
Consider the map composition: 
$$ T(K) = \T(\CO_\fp) \stackrel{r}{\rightarrow} \T_\fp(k) 
                                      \stackrel{\varphi}{\rightarrow} \Phi_T(k) $$
where $r$ is the reduction modulo $\fp$ map and $\varphi$ is the map in (\ref{k rational points of Phi}). 
As $\T$ is smooth and $\CO_\fp$ is complete (and therefore henselian), 
$r$ is surjective (see \cite[Prop.~2.3/5]{BLR}). 
Since $\varphi$ is also surjective and the kernel of $r \circ \varphi$ 
is well-known to be $\T^0(\CO_\fp)$ (see, e.g., \cite[p.~1153]{Gon}), we obtain: 
\begin{lem} \label{NR-group of components}
$ T(K)/\T^0(\CO_\fp) \cong \T_\fp(k)/\T_\fp^0(k) = \Phi_T(k)$.
\end{lem}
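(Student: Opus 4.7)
The statement is essentially assembled from ingredients the author has laid out in the paragraph preceding it, so my plan is to verify the three claims that make the composition $\varphi \circ r\colon T(K) \to \Phi_T(k)$ a surjection with the desired kernel, and then invoke the first isomorphism theorem.

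First, I would record that the reduction map $r\colon \T(\CO_\fp) \to \T_\fp(k)$ is surjective. This uses exactly the two hypotheses at hand: $\T$ is smooth over $\CO_\fp$ (after the smoothening process carried out in Section 1), and $\CO_\fp$ is henselian, being complete. Under these hypotheses the cited reference \cite[Prop.~2.3/5]{BLR} yields surjectivity, and the identification $\T(\CO_\fp) = T(K)$ has already been established in Remark \ref{lowering to local field}. Combined with the surjectivity of $\varphi$ from the Lang-theorem exact sequence (\ref{k rational points of Phi}), this immediately gives that $\varphi \circ r$ is surjective onto $\Phi_T(k)$.

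The heart of the proof — and the only step requiring any real thought — is the identification $\ker(\varphi \circ r) = \T^0(\CO_\fp)$. An element $x \in T(K) = \T(\CO_\fp)$ lies in this kernel iff $r(x) \in \ker \varphi = \T^0_\fp(k)$, i.e.\ iff the reduction of the $\CO_\fp$-section $x$ factors through the open subscheme $\T^0 \subset \T$. Because $\T^0$ is open in $\T$ and $\CO_\fp$ is local, the set of $\CO_\fp$-points whose special fiber lands in $\T^0_\fp$ is precisely $\T^0(\CO_\fp)$: the preimage of the open $\T^0$ under the map $\Sp \CO_\fp \to \T$ defined by $x$ is an open subscheme of $\Sp \CO_\fp$ containing the closed point, hence is all of $\Sp \CO_\fp$. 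This is the content alluded to by the citation \cite[p.~1153]{Gon}.

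With surjectivity of $\varphi \circ r$ and the kernel determined, the first isomorphism theorem gives
\[
T(K)/\T^0(\CO_\fp) \;\cong\; \Phi_T(k),
\]
and the middle equality $\Phi_T(k) = \T_\fp(k)/\T_\fp^0(k)$ is just a rewriting of the short exact sequence (\ref{k rational points of Phi}). I do not anticipate any genuine obstacle; the main point is simply to be careful that all three maps involved ($r$, $\varphi$, and the composition) are taken on $k$- and $\CO_\fp$-points rather than on schemes, so that Lang's theorem and the henselian lifting of smooth points are both available.
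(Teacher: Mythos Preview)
Your proposal is correct and follows exactly the same strategy as the paper: compose $r$ and $\varphi$, use smoothness plus henselian to get surjectivity of $r$ via \cite[Prop.~2.3/5]{BLR}, use Lang's theorem for surjectivity of $\varphi$, and identify the kernel as $\T^0(\CO_\fp)$. The only difference is that you spell out the open-subscheme argument for the kernel identification, whereas the paper simply cites it as well-known via \cite[p.~1153]{Gon}; this extra detail is welcome and does not constitute a different approach.
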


The same construction for the smooth standard model $X_\sm$ with its group of components $\phi_T = i^*(X_\sm / X_\sm^0)$ 
leads to the corresponding isomorphism of abelian groups:  
\begin{equation} \label{Standard group of components}
X_\sm(\CO_\fp) / X_\sm^0(\CO_\fp) \cong \phi_T(k).
\end{equation} 

\begin{lem} \label{Phi_T(k)_tor}
$ X_\sm(\CO_\fp) / \T^0(\CO_\fp) \cong \phi_T(k) = \Phi_T(k)_\tor. $
\end{lem}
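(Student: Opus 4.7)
The plan is to reduce the statement to the finiteness of $\phi_T(k)$ and the torsion-freeness of $T(K)/T(\CO_\fp)$, and then combine these through a short exact sequence comparing the two groups of components.

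First I would establish the isomorphism $X_\sm(\CO_\fp)/\T^0(\CO_\fp) \cong \phi_T(k)$ directly from the two ingredients already at our disposal: Corollary \ref{equality of identity components} gives $X_\sm^0 = \T^0$ so in particular $X_\sm^0(\CO_\fp) = \T^0(\CO_\fp)$, and equation (\ref{Standard group of components}) then yields the isomorphism. This is essentially formal.

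The substantive content is the identification $\phi_T(k) = \Phi_T(k)_\tor$. Here I would invoke the universal property of the N\'eron--Raynaud model to obtain a canonical $\CO_\fp$-morphism $X_\sm \to \T$, which is the identity on generic fibers and on $X_\sm^0 = \T^0$; one checks that it identifies $X_\sm(\CO_\fp)$ with $T(\CO_\fp) \subseteq T(K) = \T(\CO_\fp)$ (the smoothening process preserves $\CO_\fp$-points because $X(\CO_\fp)$ already lies in the smooth locus by Remark \ref{properties of standard model}). Combining this with Lemma \ref{NR-group of components} produces the exact sequence
\begin{equation*}
0 \to \phi_T(k) \to \Phi_T(k) \to T(K)/T(\CO_\fp) \to 0.
\end{equation*}
Two observations then conclude the argument. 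On the left, $\phi_T(k)$ is finite because $X_\sm$ is of finite type over $\CO_\fp$, so its group of components $\phi_T$ is a finite \'etale $k$-group scheme. On the right, $T(K)/T(\CO_\fp)$ is torsion-free: the valuation map
\begin{equation*}
T(K) \to \Hom(X^\bullet(T)_K, \BZ), \qquad t \mapsto \bigl(\chi \mapsto v_\fp(\chi(t))\bigr),
\end{equation*}
has kernel exactly $T(\CO_\fp)$ in view of the defining description of the maximal compact subgroup recalled in the text, so $T(K)/T(\CO_\fp)$ embeds into a free abelian group. A finite subgroup sitting inside $\Phi_T(k)$ with torsion-free quotient must coincide with $\Phi_T(k)_\tor$, which finishes the proof.

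The main obstacle I expect is the identification $X_\sm(\CO_\fp) = T(\CO_\fp)$; one needs to be careful that the blow-ups comprising the smoothening process neither enlarge nor shrink the set of integral points. Once this is secured, the remaining assembly is routine: the torsion-free quotient argument via characters is the standard way of exhibiting the ``cocharacter lattice quotient" of $T(K)$, and it is precisely this structure that Section 3 will later refine through Kottwitz's construction.
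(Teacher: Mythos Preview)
Your proof is correct, and the first isomorphism is handled exactly as in the paper. The identification $\phi_T(k)=\Phi_T(k)_\tor$, however, is argued along a different line from the paper's.

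The paper works inside the Weil restriction: since $\T=\N_\sm$ and $X_\sm=\N_\sm\cap\MS^0$ (cf.\ Section~\ref{models properties}), one has an inclusion
\[
\T(\CO_\fp)/X_\sm(\CO_\fp)\hookrightarrow \MS(\CO_\fp)/\MS^0(\CO_\fp)\cong\Phi_R(k),
\]
and then invokes Xarles' result \cite[Lem.~2.6]{Xar} that $\Phi_R$ is free for $R=R_{L/K}(\BG_m^d)$. This yields a splitting of $\Phi_T(k)$ into a free part and the finite part $\phi_T(k)$.

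You instead establish torsion-freeness of the quotient $T(K)/T(\CO_\fp)$ directly, via the valuation pairing with $X^\bullet(T)_K$. This is more self-contained: it avoids both the ambient Weil restriction and the external citation of \cite{Xar}, relying only on Ono's character-theoretic description of $T(\CO_\fp)$ already recalled in the text. The paper's route, on the other hand, makes visible the compatibility between the standard model and the N\'eron model of the Weil restriction, which is structurally consonant with the rest of Section~\ref{models properties}. Your caution about $X_\sm(\CO_\fp)=X(\CO_\fp)=T(\CO_\fp)$ is well placed; the paper addresses exactly this point (with a reference to \cite[\S3.1,~Def.~1]{BLR}) just before Corollary~\ref{local Shyr invariant computation}.
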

\begin{proof}
By Corollary \ref{equality of identity components}, $X_\sm^0(\CO_\fp) = \T^0(\CO_\fp)$. 
Recall that $\T$ is the smooth schematic closure $\N_\sm$ of $T$ in $\MS = R_{\CO_\fP/\CO_\fp}(\T_L)$, 
whereas $X_\sm = \N_\sm \cap \MS^0$ (see (\ref{models properties})). 
Thus as an abelian group: 
$$ \T(\CO_\fp) / X_\sm(\CO_\fp) \subseteq \MS(\CO_\fp) / \MS^0(\CO_\fp) \cong \Phi_R(k), $$
where $\Phi_R$ is the group of component of $R_{L/K}(\BG_m^d)$ and it is free (see \cite[Lem.~2.6]{Xar}). 
We get a decomposition of abelian groups: 
$$ \Phi_T(k) \cong T(K) / \T^0(\CO_\fp) \cong \T(\CO_\fp) / X_\sm(\CO_\fp) \times X_\sm(\CO_\fp) / \T^0(\CO_\fp) $$
on which the first factor is free and the second is finite (see Remark \ref{properties of standard model}) 
and therefore is the torsion part of $\Phi_T(k)$. 
\end{proof}

\subsection{local volume computations}

As $K$ is locally compact, it admits a left invariant Haar measure. 
We normalize such an additive measure $dx$ on $K$ by requiring: 
$dx(\fp)=q^{-1}$, which is equivalent to $dx(\CO_\fp)=1$. 
This induces a multiplicative Haar measure $\om_\fp$ 
on the group of points $T(K)$ (see \cite[\S~2.2]{Weil}). 

\begin{definition}
Let $M$ be one of the aforementioned $\CO_\fp$-models of a $K$-torus $T$, 
namely the (smooth) standard model or the NR one. 
We call the reduction of $M$ ``\emph{good}" if $M^0_\fp$ is a $k$-torus. 
As the identity components of these two models coincide, 
the definition of good reduction does not depend on 
the choice of a model. 
\end{definition}

\begin{prop} \emph{(\cite[Prop.~1.1]{NX})} 
A $K$-torus $T$ has good reduction if and only if it splits over an unramified extension. 
This means that $I$ acts trivially on $X^\bullet(T)$. 
\end{prop}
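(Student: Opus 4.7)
The plan is to split the statement into two equivalences: first, that ``$T$ splits over an unramified extension'' is equivalent to ``$I$ acts trivially on $X^\bullet(T)$'', and second, that either of these is equivalent to $T$ having good reduction. The first equivalence is essentially tautological: the splitting field of $T$ is the fixed field in $K^\sep$ of the kernel of the Galois action on $X^\bullet(T)$, and it is contained in $K^\sh$ (the maximal unramified extension of $K$, cf. Remark~\ref{lowering to local field}) if and only if the inertia subgroup $I \leq \G$ acts trivially on $X^\bullet(T)$.

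For the forward direction of the main equivalence (unramified splitting $\Rightarrow$ good reduction), I would assume $T$ splits over an unramified extension $L/K$, so that $\CO_\fP / \CO_\fp$ is étale. The construction recalled in \S\ref{models properties} then gives $\MS^0 = R_{\CO_\fP/\CO_\fp}(\BG_{m,\CO_\fP}^d)$, which is smooth over $\CO_\fp$ with special fiber the $k$-torus $R_{l/k}(\BG_{m,l}^d)$. Under the étaleness hypothesis, $\N^0 = X^0 = \N \cap \MS^0$ should already be smooth (no smoothening necessary), so that $\T^0 = X^0$ by Corollary~\ref{equality of identity components}; its special fiber is then a smooth connected closed $k$-subgroup of a $k$-torus, hence itself a $k$-torus.

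For the reverse direction (good reduction $\Rightarrow$ unramified splitting), I would assume $\T^0_\fp$ is a $k$-torus and base-change $\T^0$ to the strictly henselian ring $\CO_\fp^\sh$. The resulting scheme $\T^0 \otimes_{\CO_\fp} \CO_\fp^\sh$ is a smooth affine $\CO_\fp^\sh$-group whose generic fiber is the torus $T_{K^\sh}$ and whose special fiber $\T^0_\fp \otimes_k \ov{k}$ is a $\ov{k}$-torus. By the rigidity of tori (SGA~3, Exp.~X), such a group scheme is itself a torus over $\CO_\fp^\sh$, and every torus over a strictly henselian local ring is split. Thus $T_{K^\sh}$ is split, which is equivalent to $T$ splitting over the maximal unramified extension of $K$, equivalently to the triviality of the $I$-action on $X^\bullet(T)$.

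The main obstacle is the rigidity step in the reverse direction: it relies on the SGA~3 theorem that a smooth affine group scheme over a strictly henselian discrete valuation ring whose generic and special fibers are both tori is itself a torus. The forward direction, by contrast, is largely a direct construction from the Weil-restriction model of \S\ref{models properties}, provided one carefully checks that the étaleness of $\CO_\fP/\CO_\fp$ bypasses the smoothening step.
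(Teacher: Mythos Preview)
The paper does not supply its own proof of this proposition; it is stated with a bare citation to \cite[Prop.~1.1]{NX} and no argument is given. Your outline is essentially correct and is the standard way this result is proved.

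Two small remarks on the details. In the forward direction, the assertion that ``$\N^0 = X^0$ should already be smooth (no smoothening necessary)'' is true but deserves a word of justification; the cleanest route is to bypass the schematic-closure construction entirely and instead build the $\CO_\fp$-torus model directly by descent: since $I$ acts trivially on $X^\bullet(T)$, the split torus $\Sp \CO_\fP[X^\bullet(T)]$ carries a semilinear $\G/I$-action, and \'etale descent along $\CO_\fP/\CO_\fp$ produces a smooth $\CO_\fp$-group scheme with connected toric fibres whose generic fibre is $T$. This is exactly the argument in \cite{NX}. Your reverse direction via SGA~3 rigidity (a smooth affine group over a strictly henselian DVR whose two fibres are tori is itself a torus, hence split) is the standard one and is correct as stated.
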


\begin{remark} (\cite[Prop.~1.2]{NX}) 
Let $Y$ and $N$ be the kernel and image of the map
$$ tr: X^\bullet(T) \to X^\bullet(T)^I, \ \chi \mapsto \sum\limits_{\s \in I} \chi^\s. $$
Then the exact sequence: \label{canonical modules decomposition}
$$ 0 \to Y \to X^\bullet(T) \to N \to 0 $$
induces an exact sequence of $K$-tori:
\begin{equation} \label{canonical tori decomposition}
1 \to T_I \to T \to T_a \to 1
\end{equation}
on which $T_I$ is the maximal subtorus of $T$ having good reduction 
whereas $T_a$ is \emph{$I$-anisotropic}, i.e. $X^\bullet(T_a)^I = \{0\}$, 
standing extremely on the other edge with regard to the good reduction one. 
Sequence (\ref{canonical tori decomposition}) shows that any torus is an extension of such two pieces. 
\end{remark}

\begin{remark} \cite[Thm.~1.3]{NX} \label{toric part}
Let $T_{(\fp)}$ be the toric part of $\T_\fp^0$. There is an isomorphism of $\G/I$-modules:
$$ X^\bullet(T_{(\fp)}) \cong X^\bullet(T_I) \cong X^\bullet(T) / \ker (X^\bullet(T) \stackrel{tr}{\rightarrow} X^\bullet(T)^I). $$ 
\end{remark}

\begin{remark} \label{bad torus}
A $K$-torus $T$ admits a finite type NR-model $\T$ if and only if $T \otimes_K K^\sh$ 
does not contain any subgroup of type $\BG_m$ (see \cite[10.2.1]{BLR}), i.e., it is $I$-anisotropic. 
In this case $\T$ coincides with the smooth standard model $X_\sm$ (see \cite[Prop.~10.8]{Pop}). 
\end{remark}

We briefly introduce now the local Artin functions which serve as a system of convergence factors 
in the infinite product of local measures on the global Shyr invariant (see formula (\ref{Shyr global invariant})). 
The following definitions can be found in \cite[\S~13]{Vos} and in \cite[Chap.~VII,\S~10.1]{Neu}. 
The Galois group of the maximal unramified subextension in $L/K$, namely $\G / I$, 
is isomorphic to $\fg = Gal(l/k)$ 
and is generated by the Frobenius automorphism $F$. 
$\fg$ acts naturally on $X^\bullet(T)^I$, 
inducing an integral representation: 
$$ h : \fg \rightarrow Aut(X^\bullet(T)^I) \cong \textbf{GL}_{d_I}(\BZ), 
           \ d_I = \text{rank} (X^\bullet(T)^I). $$ 
Denote the character of this representation by $\chi_T$. 

\begin{defn} \label{local Artin L-function}
The \emph{local Artin L-function} for $T$ is defined by
$$ L_\fp(s,\chi_T,L/K) = L_\fp(s,\chi_T) = \det \left( 1_d - \fc{h(F)}{q^s} \right)^{-1}, $$
where $s \in \BC$ with $Re(s) > 1$. 
\end{defn}

\begin{theorem} \emph{\cite[Thm.~14.3/3]{Vos}} \label{good reduction measure} \\
If $T$ splits over an unramified extension then: $|T(k)| \cdot q^{-d} = L_\fp(1,\chi_T)^{-1}, \ d=\dim T$. 
\end{theorem}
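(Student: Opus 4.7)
The plan is to reduce the statement to the classical point-count formula for a torus over a finite field. Since $T$ splits over an unramified extension, the proposition above gives that $I$ acts trivially on $X^\bullet(T)$, so $X^\bullet(T)^I = X^\bullet(T)$ has rank $d_I = d$, and the representation $h$ of $\fg = \langle F\rangle$ in Definition \ref{local Artin L-function} is the full Frobenius action on $X^\bullet(T)$. Under the good reduction hypothesis $\T^0_\fp$ is itself a $k$-torus of dimension $d$, so I interpret $|T(k)|$ as $|\T^0_\fp(k)|$, and by Remark \ref{toric part} (applied with trivial $I$-action) its character module $X^\bullet(T_{(\fp)})$ is canonically identified with $X^\bullet(T)$ as a $\fg$-module, with Frobenius acting via $h(F)$.

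Next I would invoke the standard point-count formula: for any $k$-torus $S$ of dimension $d$ with character module $X^\bullet(S)$ and Frobenius action $h(F)$,
$$ |S(k)| = \det\bigl(q\cdot\id - h(F) \,\big|\, X^\bullet(S) \otimes \BQ\bigr). $$
The quickest route is to apply Lang's theorem to the endomorphism $F-1$ on $S(\bar k) \cong \Hom(X^\bullet(S), \bar k^\times)$, reducing to a direct computation with the dual cocharacter lattice; alternatively one checks it on the generators $\BG_m$ and $R_{l/k}\BG_m$ of the category of $k$-tori (where the answers $q-1$ and $q^{[l:k]}-1$ match the determinants directly) and extends by additivity in exact sequences of tori.

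Finally, applying this formula to $S = T_{(\fp)} = \T^0_\fp$ and factoring $q$ out of each of the $d$ columns of $q\cdot\id - h(F)$ gives
$$ |T(k)| = q^d \cdot \det\bigl(\id - h(F)/q \,\big|\, X^\bullet(T) \otimes \BQ\bigr) = q^d \cdot L_\fp(1,\chi_T)^{-1} $$
by Definition \ref{local Artin L-function} at $s=1$; dividing through by $q^d$ yields the claim. The only real obstacle is establishing the point-count formula for a general $k$-torus; once that is granted, the argument is just a matter of matching the Frobenius action on $X^\bullet(T_{(\fp)})$ with the one appearing in the definition of the local Artin $L$-function, which is immediate from the unramifiedness of the splitting.
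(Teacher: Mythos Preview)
The paper does not supply its own proof of this theorem: it is quoted verbatim from Voskresenski\u\i\ \cite[Thm.~14.3/3]{Vos} and used as a black box in the proof of Proposition~\ref{measure of identity component}. Your argument is correct and is essentially the standard one underlying the cited reference: once the good-reduction hypothesis identifies $\T^0_\fp$ with a $k$-torus whose character lattice carries the Frobenius action $h(F)$ on $X^\bullet(T)=X^\bullet(T)^I$, the point-count formula $|S(k)|=\det(q\cdot\id-h(F))$ for a torus $S$ over a finite field immediately yields the claim after factoring out $q^d$ and comparing with Definition~\ref{local Artin L-function} at $s=1$. Your two suggested justifications of the point-count formula (Lang's theorem, or checking on $\BG_m$ and Weil restrictions and using additivity in exact sequences) are both standard and either suffices.
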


\begin{defn} \label{isogeny definition} 
Let $G_1,G_2$ be algebraic groups defined over a field $K$. 
An \emph{isogeny} $\lambda: G_1 \to G_2$ is a surjective homomorphism of algebraic groups with finite kernel. 
We denote it by $\lambda: G_1 \rightleftharpoons G_2$. 
\end{defn}

\begin{theorem} \label{1-cohomology vanishes on free group of component} \emph{(\cite[2.19]{Xar})} 
Let $T$ be an algebraic torus defined over a local field $K$ 
splitting over a Galois extension $L$ with inertia subgroup $I$. 
$\Phi_T$ is torsion-free if and only if $H^1(I,X^\bullet(T))=0.$ 
\end{theorem}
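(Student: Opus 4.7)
The plan is to reduce the statement to the vanishing of a Tate cohomology group via the coinvariants description of the group of components, and then convert it into cohomology of the character lattice by Pontryagin duality.

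\textbf{Step 1: Reduction to coinvariants.} First I would invoke the Kottwitz-style description of $\Phi_T$ announced in Section~3. Applied over the geometric base (i.e.\ ignoring the Frobenius action) it yields an isomorphism $\Phi_T(\bar{k}) \cong X_\bullet(T)_I$. Since $\Phi_T$ is a commutative $k$-group scheme of finite type whose $\bar k$-points form a finitely generated abelian group that determines it, torsion-freeness of the scheme $\Phi_T$ is equivalent to torsion-freeness of the abelian group $X_\bullet(T)_I$.

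\textbf{Step 2: Identify the torsion.} The composition $X_\bullet(T)_I \xrightarrow{N} X_\bullet(T)^I \hookrightarrow X_\bullet(T)^I \otimes \BQ$ equals the canonical map into $X_\bullet(T)_I \otimes \BQ$, because over $\BQ$ the coinvariants of a finite group agree with its invariants. Since $X_\bullet(T)^I$ is $\BZ$-torsion-free, the torsion subgroup of $X_\bullet(T)_I$ is exactly $\ker N = H_1(I, X_\bullet(T))$.

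\textbf{Step 3: Pontryagin duality.} To convert this homology into cohomology of $X^\bullet(T)$, apply $H_*(I,-)$ to the exact sequence $0 \to X_\bullet(T) \to X_\bullet(T) \otimes \BQ \to X_\bullet(T) \otimes \BQ/\BZ \to 0$; since $|I|$ is invertible on the middle term, the snake identifies $H_1(I, X_\bullet(T) \otimes \BQ/\BZ)$ with $(X_\bullet(T)_I)_{\tor}$. The canonical pairing $X_\bullet(T) \otimes X^\bullet(T) \to \BZ$ gives an $I$-equivariant isomorphism $X_\bullet(T) \otimes \BQ/\BZ \cong \Hom(X^\bullet(T), \BQ/\BZ)$, and since $\BQ/\BZ$ is $\BZ$-injective the contravariant functor $\Hom(-,\BQ/\BZ)$ sends cohomology of $X^\bullet(T)$ to homology of its Pontryagin dual. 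Chaining the three isomorphisms exhibits $(X_\bullet(T)_I)_{\tor}$ as the Pontryagin dual of $H^1(I, X^\bullet(T))$; both are finite abelian groups, so one vanishes iff the other does, which is the claim.

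\textbf{Expected obstacle.} The delicate point is that the Kottwitz identification used in Step~1 is only established in Section~3, making the present theorem logically downstream of that material; either this has to be flagged or a self-contained substitute is needed. A more intrinsic alternative, closer to Xarles' original proof, would be to pick a resolution $0 \to X^\bullet(S) \to X^\bullet(R) \to X^\bullet(T) \to 0$ by $I$-permutation modules, dualize to a short exact sequence $1 \to T \to R \to S \to 1$ of tori with $R,S$ quasi-trivial (hence $\Phi_R$, $\Phi_S$ free by Lemma~2.6 of \cite{Xar}), and match the resulting long exact sequence of component groups with the long exact cohomology sequence of $X^\bullet$.
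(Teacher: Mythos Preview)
The paper does not supply a proof of this statement; it is quoted verbatim from \cite[2.19]{Xar} and is only used to motivate the remark immediately following it. So there is nothing in the paper to compare your argument against, but your proposal can still be assessed on its own.

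The overall strategy is correct, with one slip. In Step~2 you write $\ker N = H_1(I, X_\bullet(T))$; this is false in general. The kernel of the induced norm $N\colon X_\bullet(T)_I \to X_\bullet(T)^I$ is the Tate group $\hat H^{-1}(I, X_\bullet(T))$, not ordinary $H_1$ (for $I$ cyclic of order $n$ acting trivially on $\BZ$ one has $\hat H^{-1}=0$ while $H_1(I,\BZ)=\BZ/n$). Fortunately Step~3 does not use that identification: there you recompute the torsion via the homology long exact sequence of $0\to X_\bullet\to X_\bullet\otimes\BQ\to X_\bullet\otimes\BQ/\BZ\to 0$, correctly obtaining $(X_\bullet(T)_I)_{\tor}\cong H_1(I, X_\bullet(T)\otimes\BQ/\BZ)$, and the Pontryagin-duality step then gives the dual of $H^1(I,X^\bullet(T))$. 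So the argument goes through once you simply delete the stray ``$=H_1(I,X_\bullet(T))$''.

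Your worry about logical placement is well taken. The isomorphism $\Phi_T\cong X_\bullet(T)_I$ is only established in Section~3 of the paper, so a proof inserted at the present location would be forward-referencing. Since the paper merely cites the result and never uses it in a later proof, this causes no circularity in the paper as written; but if you wanted a self-contained argument here, the permutation-resolution route you sketch at the end is the natural substitute and is indeed closer to Xarles' original approach.
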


As one can observe from Theorem \ref{1-cohomology vanishes on free group of component}, 
the abelian group $H^1(I,X^\bullet(T))$ which is isomorphic by Tate-duality to $H^1(I,T)$, 
measures the lack of connectedness of the torsion part in $\Phi_T(k)$. 
As this property is not invariant under isogeny, 
according to Corollary \ref{Phi_T(k)_tor}, 
this prevents at the same time the Shyr invariant to be respected by an isogeny of tori.

\begin{prop} \label{rho invariant fixed under isogeny} \emph{(\cite[\S1.3.1]{Ono})}
Let $K$ be any field and $L$ be a finite Galois extension with $\G = Gal(L/K)$. 
Let $T^*,T \in \C(L/K)$. Then: 
$$ T^* \rightleftharpoons T \ \ \Leftrightarrow \ \ X^\bullet(T^*) \otimes \BQ \cong X^\bullet(T) \otimes \BQ \ \ \text{as $\G$-modules}. $$   
\end{prop}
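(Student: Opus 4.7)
The plan is to leverage the anti-equivalence of categories between $K$-tori split by $L$ and $\G$-lattices (finitely generated torsion-free $\BZ$-modules with a $\G$-action), given by $T \mapsto X^\bullet(T)$. Under this anti-equivalence, a homomorphism $\lambda: T^* \to T$ corresponds to a $\G$-module homomorphism $\lambda^*: X^\bullet(T) \to X^\bullet(T^*)$, and the translation of ``isogeny'' is the key bridge: $\lambda$ is an isogeny if and only if $\lambda^*$ is injective with finite cokernel. This follows because, after passing to the splitting field, $\lambda \otimes L: \BG_{m,L}^d \to \BG_{m,L}^d$ is surjective with finite kernel exactly when its character map is injective with finite cokernel (as $X^\bullet(\ker \lambda)$ is finite).

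For the forward implication, I would start with an isogeny $\lambda: T^* \rightleftharpoons T$ and dualise to an injective $\G$-equivariant map $\lambda^*: X^\bullet(T) \hookrightarrow X^\bullet(T^*)$ of finite index. Tensoring with $\BQ$ annihilates the finite cokernel and yields an isomorphism of $\BQ\G$-modules $X^\bullet(T) \otimes \BQ \cong X^\bullet(T^*) \otimes \BQ$, as required.

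For the converse, given a $\G$-equivariant isomorphism $f: X^\bullet(T) \otimes \BQ \xrightarrow{\sim} X^\bullet(T^*) \otimes \BQ$, I would clear denominators by choosing a positive integer $N$ such that $Nf$ sends the lattice $X^\bullet(T)$ into $X^\bullet(T^*)$. The resulting map $g := Nf|_{X^\bullet(T)}: X^\bullet(T) \to X^\bullet(T^*)$ is $\G$-equivariant, injective (since $f$ is), and has finite cokernel (since $g \otimes \BQ = Nf$ is still an isomorphism). Applying the anti-equivalence, $g$ is the character map of a unique $K$-homomorphism $\lambda: T^* \to T$ which, by the dictionary recalled above, is an isogeny.

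The only genuine input, and the step most worth stating precisely, is the translation between isogenies of tori and finite-cokernel injections of character lattices; everything else is a routine manipulation of lattices and their rationalisations. Once that dictionary is invoked, both directions are essentially formal, so there is no serious obstacle — the argument reduces to the well-known fact that two $\G$-lattices are $\BQ$-isomorphic iff one embeds into the other with finite index.
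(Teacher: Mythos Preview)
Your argument is correct and is precisely the standard one: the anti-equivalence $T \mapsto X^\bullet(T)$ translates isogenies into finite-index $\G$-sublattice inclusions, and two lattices are $\BQ\G$-isomorphic iff one embeds $\G$-equivariantly into the other with finite index. There is nothing to compare against here, however: the paper does not supply its own proof of this proposition but simply cites it from \cite[\S1.3.1]{Ono}, so your write-up stands on its own as a self-contained justification of the quoted result.
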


\begin{lem} \label{toric parts isogeny}
Let $T^*,T$ be $K$-isogenous tori.
Then the toric parts of their reductions are $k$-isogenous. 
\end{lem}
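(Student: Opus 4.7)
The plan is to translate ``$k$-isogenous'' back into the language of rational character modules via Proposition~\ref{rho invariant fixed under isogeny}, and then use the explicit description of the character module of the toric part of the reduction given in Remark~\ref{toric part} to reduce everything to a statement about taking $I$-invariants.

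More concretely, I first invoke Proposition~\ref{rho invariant fixed under isogeny} to rewrite the hypothesis $T^\ast \rightleftharpoons T$ as an isomorphism of $\G$-modules
\[
X^\bullet(T^\ast)\otimes\BQ \;\cong\; X^\bullet(T)\otimes\BQ.
\]
By Remark~\ref{toric part}, the toric parts $T_{(\fp)}$ and $T^\ast_{(\fp)}$ of the reductions satisfy
\[
X^\bullet(T_{(\fp)}) \;\cong\; X^\bullet(T)/\ker(tr),\qquad
X^\bullet(T^\ast_{(\fp)}) \;\cong\; X^\bullet(T^\ast)/\ker(tr^\ast),
\]
as $\G/I$-modules, where $tr$ is the trace map associated with $I$. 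Since $\BQ$ is $\BZ$-flat, tensoring preserves the exactness of $0 \to \ker(tr) \to X^\bullet(T) \to N \to 0$, so
\[
X^\bullet(T_{(\fp)})\otimes\BQ \;\cong\; \bigl(X^\bullet(T)\otimes\BQ\bigr)/\ker(tr\otimes \BQ).
\]

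Next I identify the right-hand side with the $I$-invariants. On $X^\bullet(T)\otimes\BQ$ one has $tr = |I|\cdot e_I$, where $e_I = \tfrac{1}{|I|}\sum_{\s \in I}\s$ is the $I$-averaging idempotent. Hence $\ker(tr\otimes\BQ) = \ker(e_I) = (1-e_I)(X^\bullet(T)\otimes\BQ)$, and the image of $e_I$ equals $(X^\bullet(T)\otimes\BQ)^I$, giving a canonical isomorphism of $\G/I$-modules
\[
X^\bullet(T_{(\fp)})\otimes\BQ \;\cong\; \bigl(X^\bullet(T)\otimes\BQ\bigr)^I,
\]
and similarly for $T^\ast$. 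Since the functor $(-)^I$ on $\G$-modules preserves isomorphisms and the $\G$-action descends to a $\G/I$-action on the invariants, the $\G$-isomorphism above induces a $\G/I$-isomorphism
\[
\bigl(X^\bullet(T^\ast)\otimes\BQ\bigr)^I \;\cong\; \bigl(X^\bullet(T)\otimes\BQ\bigr)^I,
\]
i.e. $X^\bullet(T^\ast_{(\fp)})\otimes\BQ \cong X^\bullet(T_{(\fp)})\otimes\BQ$ as $\fg$-modules, where $\fg = \G/I = Gal(l/k)$ is the Galois group over which both $T_{(\fp)}$ and $T^\ast_{(\fp)}$ split. Finally I apply Proposition~\ref{rho invariant fixed under isogeny} in the opposite direction, now with base field $k$ and splitting field $l$, to conclude that $T_{(\fp)}$ and $T^\ast_{(\fp)}$ are $k$-isogenous.

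I do not expect a serious obstacle here: the only point that requires minor care is the identification of $\ker(tr)\otimes\BQ$ with $(1-e_I)(X^\bullet(T)\otimes\BQ)$, but this is immediate from the averaging trick and the flatness of $\BQ$ over $\BZ$. The real content of the lemma is the observation that although $\ker(tr)\subset X^\bullet(T)$ is not an isogeny invariant on the integral level (this is exactly the point raised in the discussion after Theorem~\ref{1-cohomology vanishes on free group of component}), it becomes isogeny invariant after tensoring with $\BQ$, which is precisely what is needed for the toric parts.
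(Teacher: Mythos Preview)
Your proof is correct and follows essentially the same route as the paper: both invoke Proposition~\ref{rho invariant fixed under isogeny} to pass to rational character modules, use Remark~\ref{toric part} together with flatness of $\BQ$ to identify $X^\bullet(T_{(\fp)})\otimes\BQ$ with a quotient of $X^\bullet(T)\otimes\BQ$ depending only on the $\G$-module structure, and then apply Proposition~\ref{rho invariant fixed under isogeny} again over $k$. Your use of the averaging idempotent $e_I$ to recognize this quotient as $(X^\bullet(T)\otimes\BQ)^I$ is simply a more explicit justification of the step the paper phrases as ``therefore the kernels of $tr\otimes\BQ$ on $X^\bullet(T)\otimes\BQ$ and on $X^\bullet(T^\ast)\otimes\BQ$ coincide.''
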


\begin{proof}
Consider the two exact sequence of $\G$-modules for $T$ and $T^*$, 
induced by the canonical decomposition (\ref{canonical tori decomposition}). 
Since $\BQ$ is flat as a $\BZ$-module, exactness is preserved after tensoring with it:
$$ 0 \to X^\bullet(T_a)   \otimes \BQ \to X^\bullet(T)   \otimes \BQ \to X^\bullet(T_I) \otimes \BQ \to 0 $$
$$ 0 \to X^\bullet(T_a^*) \otimes \BQ \to X^\bullet(T^*) \otimes \BQ \to X^\bullet(T^*_I) \otimes \BQ \to 0. $$ 
Recall from Theorem \ref{toric part} that: 
$$ X^\bullet(T_I) \cong X^\bullet(T) / \ker (X^\bullet(T) \stackrel{tr}{\rightarrow} X^\bullet(T)^I). $$ 
As $T$ and $T^*$ are $K$-isogenous, 
according to Theorem \ref{rho invariant fixed under isogeny}, 
$ X^\bullet(T) \otimes \BQ \cong X^\bullet(T^*) \otimes \BQ$ and therefore:
$$ \ker (X^\bullet(T) \otimes \BQ \stackrel{tr}{\rightarrow} X^\bullet(T)^I \otimes \BQ) 
 = \ker (X^\bullet(T^*) \otimes \BQ \stackrel{tr}{\rightarrow} X^\bullet(T^*)^I \otimes \BQ) $$
implying that
$ X^\bullet(T_I) \otimes \BQ \cong X^\bullet(T^*_I) \otimes \BQ. $
By Remark (\ref{toric part}) 
this is equivalent to an isomorphism of their reduction toric part as $\fg = Gal(l/k)$-modules:
$$ X^\bullet(T^*_{(\fp)}) \otimes \BQ \cong X^\bullet(T_{(\fp)}) \otimes \BQ, $$
which again by Proposition \ref{rho invariant fixed under isogeny} 
implies that $T^*_{(\fp)}$ and $T_{(\fp)}$ are $k$-isogenous. 
\end{proof}

We consider another good reduction torus associated to $T$,  
namely, the factor torus $T^I$ corresponding to the $\G$-module $X^\bullet(T)^I$. 

\begin{lem} \label{T_I and T^I}
$T_I$ and $T^I$ are $K$-isogenous.
\end{lem}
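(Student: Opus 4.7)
The plan is to invoke Proposition \ref{rho invariant fixed under isogeny}: two $K$-tori in $\C(L/K)$ are $K$-isogenous if and only if their character modules become isomorphic over $\BQ$ as $\G$-modules. So the task reduces to showing $X^\bullet(T_I) \otimes \BQ \cong X^\bullet(T^I) \otimes \BQ$ as $\G$-modules.

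By construction, $X^\bullet(T_I) = N$ is the image of the trace map
$$ tr : X^\bullet(T) \to X^\bullet(T)^I, \ \ \chi \mapsto \sum_{\s \in I} \chi^\s, $$
while $X^\bullet(T^I)$ is, by definition, the full invariant submodule $X^\bullet(T)^I$. Both carry a natural action of $\G/I$ (and hence of $\G$, lifted through the quotient), and the inclusion $\iota : N \hookrightarrow X^\bullet(T)^I$ is manifestly a morphism of $\G$-modules, since the trace is $\G$-equivariant.

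The first substantive point to verify is that the cokernel of $\iota$ is finite. For any $x \in X^\bullet(T)^I$ one has $tr(x) = |I| \cdot x \in N$, so $|I| \cdot X^\bullet(T)^I \subseteq N$. Because $X^\bullet(T)^I$ is a finitely generated $\BZ$-module (subgroup of the free abelian group of characters), the quotient $X^\bullet(T)^I / N$ is annihilated by $|I|$ and finitely generated, hence finite. Tensoring the short exact sequence
$$ 0 \to N \to X^\bullet(T)^I \to X^\bullet(T)^I / N \to 0 $$
with the flat $\BZ$-module $\BQ$ kills the finite cokernel and turns $\iota$ into an isomorphism $N \otimes \BQ \stackrel{\sim}{\to} X^\bullet(T)^I \otimes \BQ$ of $\BQ[\G]$-modules.

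With that isomorphism in hand, Proposition \ref{rho invariant fixed under isogeny} applies directly and gives a $K$-isogeny between $T_I$ and $T^I$. I expect no real obstacle beyond keeping track of the $\G$-module structure; the only subtlety is the identification $X^\bullet(T^I) = X^\bullet(T)^I$ (which is built into the definition of $T^I$ as the factor torus associated to this $\G$-module) and the observation $|I| \cdot X^\bullet(T)^I \subseteq N$, which is what forces the cokernel to vanish rationally.
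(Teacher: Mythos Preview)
Your proof is correct. Both you and the paper reduce to showing that $X^\bullet(T_I)=N$ and $X^\bullet(T^I)=X^\bullet(T)^I$ are commensurable $\G$-lattices, but the routes differ slightly. The paper applies $I$-invariants to the exact sequence $0 \to X^\bullet(T_a) \to X^\bullet(T) \stackrel{tr}{\to} X^\bullet(T_I) \to 0$, obtaining an injection $X^\bullet(T)^I \hookrightarrow X^\bullet(T_I)$ whose cokernel sits in the finite group $H^1(I,X^\bullet(T_a))$; from this it reads off directly an epimorphism $T_I \twoheadrightarrow T^I$ with finite kernel. You instead use the obvious inclusion $N \hookrightarrow X^\bullet(T)^I$ and the elementary identity $tr(x)=|I|\cdot x$ for $x\in X^\bullet(T)^I$ to bound the cokernel by $|I|$-torsion, then invoke Proposition~\ref{rho invariant fixed under isogeny}. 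Your argument is a bit more self-contained (no cohomology needed), while the paper's version exhibits the isogeny explicitly rather than appealing to the abstract criterion.
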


\begin{proof}
Consider again the canonical decomposition of $\G$-modules (\ref{canonical modules decomposition}):
$$ 0 \to X^\bullet(T_a) \to X^\bullet(T) \stackrel{tr}{\rightarrow} X^\bullet(T_I) \to 0. $$
Taking the $I$-invariants gives the long exact sequence:
$$ 0 \to X^\bullet(T_a)^I = 0 \to X^\bullet(T)^I \to X^\bullet(T_I)^I = X^\bullet(T_I) \to H^1(I,T_a). $$
The finiteness of $H^1(I,T_a)$ implies the one of $\cok(X^\bullet(T)^I \to X^\bullet(T_I))$, 
which means that the $\BZ$-lattice $X^\bullet(T)^I$ 
is a sublattice of finite index in the $\BZ$-lattice $X^\bullet(T_I)$. 
Back to $K$-tori, this indicates that the corresponding epimorphism $T_I \to T^I$ has a finite kernel. 
\end{proof}

\begin{prop} \label{measure of identity component}
$ \ \ \om_\fp(\T^0(\CO_\fp)) = |\T_\fp^0(k)| \cdot q^{-d} = L_\fp(1,\chi_T)^{-1}. $ 
\end{prop}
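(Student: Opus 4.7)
The plan is to prove the two equalities separately. The left one follows from Weil's measure-theoretic formula for smooth group schemes, while the right one reduces the general case to the good-reduction Theorem~\ref{good reduction measure} via the Chevalley decomposition of $\T_\fp^0$ together with an isogeny argument.

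For the left equality, recall from Corollary~\ref{equality of identity components} that $\T^0 = X_\sm^0$ is a smooth $\CO_\fp$-group scheme of relative dimension $d$. Since $\CO_\fp$ is henselian, the reduction map $r \colon \T^0(\CO_\fp) \to \T_\fp^0(k)$ is surjective by \cite[Prop.~2.3/5]{BLR}, and its fibers are translates of the congruence kernel $\ker r$. Choosing an $\CO_\fp$-basis of invariant differentials near the identity identifies $\ker r$ analytically with $\fp^d$; by Weil's construction of $\om_\fp$ from $dx$ in \cite[\S~2.2]{Weil} and the normalization $dx(\CO_\fp) = 1$, one therefore has $\om_\fp(\ker r) = q^{-d}$. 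Summing over the $|\T_\fp^0(k)|$ residue classes yields $\om_\fp(\T^0(\CO_\fp)) = |\T_\fp^0(k)| \cdot q^{-d}$.

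For the right equality, note that $\T_\fp^0$ is a smooth connected commutative affine $k$-group of dimension $d$, so Chevalley's structure theorem provides an exact sequence
$$ 1 \to U \to \T_\fp^0 \to T_{(\fp)} \to 1, $$
with $U$ smooth connected unipotent of dimension $u$ and $T_{(\fp)}$ the maximal subtorus of Remark~\ref{toric part}. Lang's theorem gives $H^1(\fg, U(l)) = 1$, so the sequence stays exact on $k$-points; as $U$ is a successive extension of copies of $\BG_a$ over the finite field $k$, one has $|U(k)| = q^u$, and hence
$$ |\T_\fp^0(k)| \cdot q^{-d} = |T_{(\fp)}(k)| \cdot q^{-(d-u)} = |T_{(\fp)}(k)| \cdot q^{-d_I}, $$
using $\dim T_{(\fp)} = \mathrm{rank}\, X^\bullet(T_I) = d_I$ by Remark~\ref{toric part} and Lemma~\ref{T_I and T^I}. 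To identify the right-hand side with $L_\fp(1,\chi_T)^{-1}$, observe that by Lemma~\ref{T_I and T^I} the tori $T_I$ and $T^I$ are $K$-isogenous; both have good reduction, so by Lemma~\ref{toric parts isogeny} their reductions are $k$-isogenous, and the reduction of $T_I$ is precisely $T_{(\fp)}$. The $k$-point count of a $k$-torus is determined by the characteristic polynomial of Frobenius on $X^\bullet \otimes \BQ$, which is an isogeny invariant by Proposition~\ref{rho invariant fixed under isogeny}; therefore $|T_{(\fp)}(k)| = |T^I(k)|$. Applying Theorem~\ref{good reduction measure} to $T^I$, whose character lattice is $X^\bullet(T)^I$ with Frobenius character exactly the $\chi_T$ of Definition~\ref{local Artin L-function}, gives $|T^I(k)| \cdot q^{-d_I} = L_\fp(1,\chi_T)^{-1}$, completing the chain.

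The main obstacle is the final identification: one must verify that the $\fg$-action on $X^\bullet(T_{(\fp)})$ obtained from the reduction genuinely matches the $\fg$-action on $X^\bullet(T)^I$ defining $\chi_T$, so that the isogeny invariance really transports $|T_{(\fp)}(k)|$ to the Artin value attached to $T$. Once this is pinned down, the rest is a formal assembly of Weil's volume formula, Chevalley's theorem and Lang's theorem.
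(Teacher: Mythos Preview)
Your proof is correct and follows essentially the same route as the paper's. The only cosmetic differences are that the paper writes the Chevalley decomposition of $\T_\fp^0$ as a direct product $T_{(\fp)} \times U$ (valid since the group is commutative over a perfect field) rather than an exact sequence, and the paper transports the isogeny $T_I \rightleftharpoons T^I$ to an equality of local $L$-functions (citing \cite[p.~106]{Vos}) whereas you transport it to an equality of $k$-point counts via the characteristic polynomial of Frobenius; these are two phrasings of the same invariance. The ``main obstacle'' you flag is handled in the paper exactly as you anticipate, via Remark~\ref{toric part} identifying $X^\bullet(T_{(\fp)}) \cong X^\bullet(T_I)$ as $\G/I$-modules.
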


\begin{proof} 
$\T^0$ is the reduction preimage of the $k$-group $\T_\fp^0$, 
thus it is smooth and therefore the reduction of points $\T^0(\CO_\fp) \to \T_\fp^0(k)$ is surjective. 
Consider the exact sequence: 
\begin{equation} \label{identity component points decomposition}
1 \to \T^1(\CO_\fp) \to \T^0(\CO_\fp) \to \T_\fp^0(k) \to 1.
\end{equation}
The reduction image of $\T^1(\CO_\fp)$ is the $d$-tuple $(1,..,1)$ in $\T_\fp^0(k)$ where $d = \dim \T^0 = \dim T$. 
It is isomorphic to another preimage of this map, namely $(1+\fp)^d$, 
which is homeomorphic to the additive group $\fp^d$    
implying that $\om_\fp(\T^1(\CO_\fp)) = \bigwedge_{i=1}^d dx_i(\fp^d) = q^{-d}$
and consequently by (\ref{identity component points decomposition}): 
$$ \om_\fp(\T^0(\CO_\fp)) = |\T_\fp^0(k)|  \cdot q^{-d}. $$
As for the right hand equality of the Proposition, 
$\T_\fp^0$ is an affine smooth and connected $k$-group (see \cite[\S1]{NX}). 
It has a canonical decomposition over $k$: 
$$ \T_\fp^0 = T_{(\fp)} \times U $$  
where $T_{(\fp)}$ is a $k$-torus and $U$ is a unipotent $k$-group. 
$U$ is isomorphic to an affine space $\BA_k^{\dim U}$ thus $ |U(k)| = q^{\dim U}$ 
and therefore: $|\T_\fp^0(k)| \cdot q^{-d} = |T_{(\fp)}(k)| \cdot q^{-d_I}$ 
where $d_I = \dim T_{(\fp)}$. 
Let $T_I$ be the maximal subtorus of $T$ with good reduction.  
From Remark \ref{toric part}, $X_\bullet(T_I) \cong X_\bullet(T_{(\fp)})$ as $\G/I$-modules. 
Thus we may deduce that $T_{(\fp)}$ is the reduction of $T_I$, splitting over an unramified extension. 
Hence by Theorem \ref{good reduction measure} $|T_{(\fp)}(k)| \cdot q^{-d_I} = L_\fp(1,\chi_{T_I})^{-1}.$ 
According to Lemma \ref{T_I and T^I}, $T_I$ and $T^I$ are $K$-isogenous. 
These tori have good reduction, thus due to Lemma \ref{toric parts isogeny} 
their reductions (being $k$-tori) are $k$-isogenous,  
implying that their $L$-functions coincide (see \cite[p.~106]{Vos}). 
In particular $L_\fp(1,\chi_{T_I})$ is equal to $L_\fp(1,\chi_{T^I})$
which is by definition equal to $L_\fp(1,\chi_T)$. 
\end{proof}

As noted in Remark \ref{properties of standard model}, 
$X(\CO_\fp)$ is the maximal compact subgroup of $T(K)$.  
Further, it is equal to $X_\sm(\CO_\fp)$ (see \cite[\S3.1~Def.~1]{BLR}). 
From Lemmas \ref{Phi_T(k)_tor} and \ref{measure of identity component} 
the local component in the Shyr invariant can be computed by:
\begin{cor} \label{local Shyr invariant computation}
$ L_\fp(1,\chi_T) \cdot \om_\fp(X_\sm(\CO_\fp)) = (X_\sm(\CO_\fp) : \T^0(\CO_\fp)) = |\Phi_T(k)_\tor|. $
\end{cor}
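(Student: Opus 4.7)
The plan is to assemble this formula directly from the two preceding results, namely Lemma~\ref{Phi_T(k)_tor} and Proposition~\ref{measure of identity component}, by splitting $\om_\fp(X_\sm(\CO_\fp))$ along the filtration $\T^0(\CO_\fp) \subseteq X_\sm(\CO_\fp)$.

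First I would check that $\T^0(\CO_\fp)$ sits as an open subgroup of $X_\sm(\CO_\fp)$ of finite index. Openness follows from Corollary~\ref{equality of identity components}: $\T^0 = X_\sm^0$ is an open subscheme of $X_\sm$, so $\T^0(\CO_\fp)$ is open in $X_\sm(\CO_\fp)$ for the $\fp$-adic topology. Finiteness of the index is immediate from Lemma~\ref{Phi_T(k)_tor}, which identifies $X_\sm(\CO_\fp)/\T^0(\CO_\fp)$ with $\phi_T(k) = \Phi_T(k)_\tor$. In particular the second equality of the corollary, $(X_\sm(\CO_\fp):\T^0(\CO_\fp)) = |\Phi_T(k)_\tor|$, is already in hand.

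Next I would use translation-invariance of the Haar measure $\om_\fp$ on $T(K)$: decomposing $X_\sm(\CO_\fp)$ into finitely many cosets of $\T^0(\CO_\fp)$ yields
$$ \om_\fp(X_\sm(\CO_\fp)) = (X_\sm(\CO_\fp) : \T^0(\CO_\fp)) \cdot \om_\fp(\T^0(\CO_\fp)). $$
Substituting $\om_\fp(\T^0(\CO_\fp)) = L_\fp(1,\chi_T)^{-1}$ from Proposition~\ref{measure of identity component} and multiplying both sides by $L_\fp(1,\chi_T)$ gives
$$ L_\fp(1,\chi_T)\cdot\om_\fp(X_\sm(\CO_\fp)) = (X_\sm(\CO_\fp):\T^0(\CO_\fp)) = |\Phi_T(k)_\tor|, $$
which is exactly the claim.

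There is essentially no obstacle beyond bookkeeping here; the corollary is a direct multiplicative combination of the volume computation on the identity component (the ``continuous'' part, carrying the $L$-factor) and the index computation (the ``discrete'' part, carrying the torsion of $\Phi_T(k)$). The only subtle point worth flagging in the write-up is that one must use the smooth model $X_\sm$ rather than $X$ itself, so that $X_\sm^0 = \T^0$ holds on the nose; this is already ensured by Corollary~\ref{equality of identity components} and by the remark that $X(\CO_\fp) = X_\sm(\CO_\fp)$ noted just before the statement.
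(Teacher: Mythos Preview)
Your proof is correct and follows exactly the approach the paper intends: the corollary is stated immediately after the sentence ``From Lemmas~\ref{Phi_T(k)_tor} and \ref{measure of identity component} the local component in the Shyr invariant can be computed by,'' with no further argument given. Your write-up simply makes explicit the coset decomposition and the use of translation-invariance of $\om_\fp$ that the paper leaves implicit.
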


\bk

\section{Relation with the cocharacter group}

The following construction can be found in \cite[\S7.2]{Ko}.  
It was originally defined over a $p$-adic field 
but as we shall see, it can be applied also in case (F).
Let $K^\sh$ be the strict henselization 
of the local field $K$ in a separable closure $K_s$. 
As $K^\sh$ is the maximal unramified extension of $K$, 
the group $Gal(K_s / K^\sh)$ is the inertia subgroup $I$ of the absolute one $Gal(K_s / K)$. 
R. Kottwitz extends the canonical epimorphism
$ (K^\sh)^\times \to \BZ $
with kernel equal to the group of units of $K^\sh$, to an epimorphism
$$ \mathcal{K}_T:T(K^\sh) \to X_\bullet(T)_I $$
where the latter group is the $I$-coinvariants of the cocharacter group. 
Let $\T$ be the NR-model of $T$ defined over the ring of integers $\CO^\sh_\fp$ of $K^\sh$. 

\begin{lem}
$ \ker(\mathcal{K}_T) = \T^0(\CO^\sh_\fp)$. 
\end{lem}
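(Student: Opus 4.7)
The plan is to verify this by functoriality in $T$, reducing from a general torus to the case $T=\BG_m$.

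For $T=\BG_m$ the map $\mathcal{K}_T$ is by construction the valuation $v:(K^\sh)^\times\to\BZ$, whose kernel is the unit group $\CO_\fp^{\sh,\times}=\BG_m(\CO_\fp^\sh)=\T^0(\CO_\fp^\sh)$, since $\BG_m$ is already its own (connected) N\'eron model.

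For a quasi-trivial torus $R=R_{L/K}(\BG_m)$ with $L/K$ a finite separable extension, base change to $K^\sh$ decomposes $L\otimes_K K^\sh$ as a product of local fields $L_i$, so $R(K^\sh)=\prod_i L_i^\times$. The N\'eron model is the Weil restriction $R_{\CO_L/\CO_\fp}(\BG_m)$, which is smooth and already connected, and $\mathcal{K}_R$ becomes the product of the local valuations $v_{L_i}$. Its kernel is $\prod_i\CO_{L_i}^\times=\mathcal{R}^0(\CO_\fp^\sh)$, and this matches $X_\bullet(R)_I$ since $X_\bullet(R)$ is an induced $\G$-module whose $I$-coinvariants compute to $\prod_i\BZ$.

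For an arbitrary $K$-torus $T$, I would choose a short exact sequence
$$ 1\to T'\to R_0\to T\to 1 $$
with $R_0$ quasi-trivial, apply the functorial Kottwitz map together with the formation of identity components of N\'eron models, and invoke the snake lemma to reduce the statement for $T$ to the two cases above. Both sides of the claimed identity are exact in $T$ in the appropriate sense: for the left side, the target $X_\bullet(T)_I$ is a right-exact functor of $T$, while on the right side, identity components of N\'eron models behave well under such resolutions.

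The main obstacle here is the exactness of the induced sequence on $\CO_\fp^\sh$-points of the identity components, in particular the surjectivity of $\mathcal{R}_0^0(\CO_\fp^\sh)\to\T^0(\CO_\fp^\sh)$. This is obtained by combining smoothness of the N\'eron identity components, which yields surjectivity of reduction modulo $\fp$ by henselianity of $\CO_\fp^\sh$, with Lang's theorem applied over the separably closed residue field of $\CO_\fp^\sh$, which ensures the vanishing of $H^1$ of the connected $\bar k$-group arising as the special fiber of $T'^{\,0}$.
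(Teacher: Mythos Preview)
Your approach is quite different from the paper's, which simply cites Haines--Rapoport: $\ker(\mathcal{K}_T)$ is the unique Iwahori subgroup of $T(K^\sh)$, and this coincides with $\T^0(\CO_\fp^\sh)$ by their characterization of parahorics.

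The d\'evissage you propose has a genuine gap. In the resolution $1\to T'\to R_0\to T\to 1$ with $R_0$ quasi-trivial, the kernel $T'$ is \emph{not} one of ``the two cases above'': its cocharacter lattice is the kernel of a surjection from a permutation module onto $X_\bullet(T)$, and this has no reason to be a permutation module itself. So the snake lemma does not reduce the statement for $T$ to the cases you have already handled, and no induction on dimension is available since typically $\dim T'>\dim T$. As written, the argument is circular.

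There are further difficulties even if one grants the statement for $T'$. Right-exactness of the top row needs $R_0(K^\sh)\to T(K^\sh)$ surjective, i.e.\ $H^1(K^\sh,T')=0$; but $\mathrm{Gal}(K_s/K^\sh)=I$ and $H^1(I,T')$ can be nonzero (take $T'$ a norm-one torus for a ramified cyclic extension). On the bottom row, $X_\bullet(T')_I\to X_\bullet(R_0)_I$ need not be injective, since the source can have torsion while the target is free. Your argument for the surjectivity $\mathcal{R}_0^0(\CO_\fp^\sh)\to\T^0(\CO_\fp^\sh)$ presupposes that the induced map on connected special fibers is a surjection of algebraic groups, which you have not checked; and the $H^1$-vanishing you invoke over the algebraically closed residue field is not Lang's theorem (that is a statement about Frobenius over a finite field) but rather the triviality of torsors over an algebraically closed field.
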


\begin{proof}
As noted in the first line of the proof of \cite[Prop.~3,~p.189]{HR}, 
$\ker(\mathcal{K}_T)$ is the unique Iwahori subgroup of $T(K^\sh)$. 
Thus, by \cite{HR}, Definition 1, p.188, and the statement of the cited proposition, 
$\ker(\mathcal{K}_T)$ coincides with $\T^0(\CO_\fp^\sh)$ (see \cite[Remarks~2.2.(iii)]{RP}). 
Note that the proof applies to any strictly henselian discretely-valued field 
and therefore covers both cases (N) and (F).
\end{proof}

Since the residue field of $K^\sh$ is $k_s$, 
the group of components of $\T$ splits over it, i.e. $\Phi_T(k_s) = \Phi_T$. 
Hence together with Lemma \ref{NR-group of components}, 
the Kottwitz construction gives rise to an isomorphism

\begin{equation} \label{Phi and Kottwitz}
T(K^\sh) / \T^0(\CO_\fp^\sh) \cong \Phi_T \cong X_\bullet(T)_I.
\end{equation}

Now let $T$ be defined over $K$ and let $\T$ be its NR-model. 
The absolute group $\fg_\sep = Gal(k_s / k)$ 
being generated by the Frobenuis automorphism $F$, 
is identified with $\G_\sh = Gal(K^\sh / K)$. 
The scheme $\T^0$ has a geometrically connected fiber. 
Moreover, it is affine over $\CO_\fp$ (see \cite[Prop.~3]{KM} and \cite[p.~290]{BLR}). 
Thus by Lang's Theorem, the cohomology group 
$$ H^1(\left<F\right>,\T^0(\CO_\fp^\sh)) = H^1(\left<F\right>,\T^0(k_s)) $$
where $k$ is considered as a $\CO_\fp$-algebra, is trivial.  
Hence taking the $\G_\sh$-invariants of (\ref{Phi and Kottwitz}) 
gives rise to an epimorphism (see \cite[7.6.2]{Ko}): 
$$ T(K) \to (X_\bullet(T)_I)^{\left<F\right>} $$
with kernel equals to $\T^0(\CO_\fp^\sh) \cap T(K) = \T^0(\CO_\fp)$. 
Again by Lemma \ref{NR-group of components} and Corollary \ref{local Shyr invariant computation} 
we get:

\begin{cor} \label{local Shyr invariant using cocharacter group}
$$ T(K) / \T^0(\CO_\fp) \cong \Phi_T(k) \cong \ker(1-F|X_\bullet(T)_I). $$
and:
$$ X_\sm(\CO_\fp) / \T^0(\CO_\fp) \cong \Phi_T(k)_\tor \cong \ker(1-F|X_\bullet(T)_I)_\tor. $$
\end{cor}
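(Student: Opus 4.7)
The plan is to assemble the pieces that the preceding paragraphs have already put in place; the corollary is essentially a summary of what has been proved.

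First, I would dispense with the identifications $T(K)/\T^0(\CO_\fp)\cong\Phi_T(k)$ and $X_\sm(\CO_\fp)/\T^0(\CO_\fp)\cong\Phi_T(k)_\tor$, which are nothing but Lemma \ref{NR-group of components} and Lemma \ref{Phi_T(k)_tor}. So the content of the corollary is to identify $\Phi_T(k)$, and its torsion part, with $\ker(1-F\mid X_\bullet(T)_I)$ and $\ker(1-F\mid X_\bullet(T)_I)_\tor$ respectively.

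For the first of these, I would start from the Kottwitz short exact sequence of $\G_\sh=\langle F\rangle$-modules
$$ 1 \to \T^0(\CO^\sh_\fp) \to T(K^\sh) \to X_\bullet(T)_I \to 1, $$
which is equation (\ref{Phi and Kottwitz}) combined with the lemma identifying $\ker(\mathcal{K}_T)$ with $\T^0(\CO^\sh_\fp)$. Taking the long exact sequence in $\langle F\rangle$-cohomology, the vanishing of $H^1(\langle F\rangle,\T^0(\CO_\fp^\sh))$ established via Lang's theorem in the paragraph preceding the corollary yields a short exact sequence
$$ 1 \to \T^0(\CO^\sh_\fp)^{\langle F\rangle} \to T(K^\sh)^{\langle F\rangle} \to (X_\bullet(T)_I)^{\langle F\rangle} \to 1. $$
I would then identify $T(K^\sh)^{\langle F\rangle}=T(K)$ and $\T^0(\CO^\sh_\fp)^{\langle F\rangle}=\T^0(\CO_\fp)$ using Remark \ref{lowering to local field}, and observe that since $X_\bullet(T)_I$ is an abelian group acted on by $\langle F\rangle$, the fixed-point subgroup equals $\ker(1-F\mid X_\bullet(T)_I)$. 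Combined with Lemma \ref{NR-group of components} this proves the first isomorphism.

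The second isomorphism then comes for free: the isomorphism $\Phi_T(k)\cong\ker(1-F\mid X_\bullet(T)_I)$ just constructed is one of abelian groups, so it carries the torsion subgroup to the torsion subgroup, and it suffices to invoke Lemma \ref{Phi_T(k)_tor}. I do not anticipate a real obstacle here, since all the nontrivial input — the Kottwitz presentation of the group of components, the vanishing of $H^1$ via Lang, the descent from $K^\sh$ to $K$ via separatedness and the universal property of the NR-model, and the isolation of the torsion part via the free part contributed by $R_{L/K}(\BG_m^d)$ — has already been carried out earlier in the paper; what remains is only to paste these ingredients together into one short chain of isomorphisms.
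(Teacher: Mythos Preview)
Your proposal is correct and follows essentially the same route as the paper: the paper likewise takes $\G_\sh$-invariants of the Kottwitz sequence (\ref{Phi and Kottwitz}), uses Lang's vanishing of $H^1(\langle F\rangle,\T^0(\CO_\fp^\sh))$ to obtain the epimorphism $T(K)\to (X_\bullet(T)_I)^{\langle F\rangle}$ with kernel $\T^0(\CO_\fp)$, and then invokes Lemma~\ref{NR-group of components} and Corollary~\ref{local Shyr invariant computation} (equivalently your Lemma~\ref{Phi_T(k)_tor}) for the torsion part. The only cosmetic difference is that the paper cites Corollary~\ref{local Shyr invariant computation} rather than directly Lemma~\ref{Phi_T(k)_tor} for the identification $X_\sm(\CO_\fp)/\T^0(\CO_\fp)\cong\Phi_T(k)_\tor$.
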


\begin{example} \label{example}
Let $L$ be a cyclic extension of $K$ with Galois group $\G = \left< \s \right>$. 
Let $R = R_{L/K}(\BG_m)$ be the corresponding Weil torus, 
i.e., such that for any $K$-algebra $B$, $R(B) = (B \otimes L)^\times$. 
The \emph{norm torus} $T' = R_{L/K}^{(1)}(\BG_m)$ is the kernel of the norm map $N_{L/K}:R \to \BG_{m,K}$, 
mapping any element of $R(B)$ to the product of its images under all Galois automorphisms. 
Suppose $L/K$ is totally ramified, i.e., $I=\G$. 
Then $T'$ is an $I$-anisotropic torus. 
Its NR-model $\T'$ which is of finite type, 
coincides with the smooth standard one $X'_\sm$ (see Remark \ref{bad torus}). 
Note that the character group $X^\bullet(R) = \BZ[\G]$ (see \cite[\S3.12]{Vos}), 
coincides as a $\G$-module with the group of cocharacters: 
$$ X_\bullet(R) = \Hom(\BG_m,R \otimes L) 
                = \Hom(\BZ,\BZ[\G]) = \BZ[\G] = X^\bullet(R). $$ 
Since $\G$ is cyclic, $T$ is isomorphic as a $\G$-module to the projective group $P = R / \BG_m$ 
(see \cite[p.~22]{LL}). 
The exact sequences of $K$-tori:
$$ 1 \to T' \to R \to \BG_m \to 1 $$
$$ 1 \to \BG_m \to R \to P \to 1  $$
induce the exact sequences of dual modules:
$$ 0 \to \BZ \to X^\bullet(R) \to X^\bullet(T') \to 0 $$
$$ 0 \to \BZ \to X_\bullet(R) \to X_\bullet(P) \to 0. $$
We get an isomorphism of $\G$-modules: $ X_\bullet(T') \cong X_\bullet(P) \cong X^\bullet(T'). $
Explicitly we have
$$ X'_\sm(\CO_\fp) / \T'^0(\CO_\fp) \cong X_\bullet(T')_I = \left( \BZ[\s] / \sum\limits_i \s^i \right)/ (1-\s) 
                                                    = \mu_n $$ 
thus by Corollary \ref{local Shyr invariant computation} we get 
$L_\fp(1,\chi_{T'}) \cdot \om_\fp(X'_\sm(\CO_\fp)) = n.$ 
More generally, for any extension $L/K$, we have $X'_\sm(\CO_\fp) / \T'^0(\CO_\fp) \cong \phi_{T'} = \mu_e$ 
where $e$ is the ramification index (see \cite[Thm.~3]{Pop}).
\end{example}

\bk

\section{Ono and Shyr invariants}

In the following Section we briefly describe the analogues 
of the arithmetic invariants of number fields for algebraic tori as defined by Ono, 
and the analogue of the classical class number formula for algebraic $\BQ$-tori, 
as formulated by Ono and Shyr. 
This construction is generalized to $K$-tori 
where $K$ is any global field. 
Finally, our local results will be inserted in these invariant formulas.

\subsection{Arithmetical invariants of algebraic tori}

\begin{notation}
Let $K$ be a global field, i.e., either a number field 
or an algebraic function field in one variable over a finite field of constants $\BF_q$. 
We denote: \\
$\Delta_K $ -- the discriminant of $K$. In case~(F) equals to $q^{2g-2}$ where $g$ is the genus of $K$. \\
$S $ -- a finite set of valuations of $K$ which contains the set $S_\iy$ of the archimedean ones. \\  
$K_v $ -- the completion of $K$ with respect to a valuation $v \in S$. \\
$\CO_v $ -- the ring of integers of $K_v$ and $U_v = \CO_v^\times$ -- its subgroup of units. \\
$T_v = T \otimes K_v $ and 
$T_v(\CO_v) = \{ x \in T_v(K_v) : \chi(x) \in U_v \ \forall \chi \in X^\bullet(T)_{K_v} \}$. \\
If $v = \fp$ is a prime, $T_\fp(\CO_\fp)$ is the maximal compact subgroup of $T_\fp(K_\fp)$, \\ 
$\fP$ is a prime of $L$ lying over $\fp$,
$ \G_\fp = Gal(L_\fP/K_\fp) $ and $I_\fp$ is the inertia subgroup. 
\end{notation}

In Definition \ref{local Artin L-function} we defined the local Artin $L$-function. 
Globally, consider the action of $\G$ on $X^\bullet(T)$ and the corresponding representation 
$\G \to Aut(X^\bullet(T)) \cong \textbf{GL}_n(\mathbb \BZ)$. 
Its character $\chi_T$ 
is decomposed into a sum of irreducible characters of $\G$ with integral coefficients:
$$ \chi_T = \sum_{i=1}^m a_i \chi_i, \ a_i \in \BZ $$ 
where $\chi_1$ is the \emph{principal character}. \\
The \emph{global Artin L-function} is defined by the Euler product: 
$$ L(s,\chi_T) = L(s,\chi_T,L/K) = \prod_\fp L_\fp(s,\chi_{T_\fp},L_\fP / K_\fp) $$
again with $Re(s) > 1$, having a pole at $s=1$ of order $a_1$.

\begin{definition} 
The \emph{quasi-residue} of $T$ is the limit:
$$ \rho_T = \lim_{s \to 1} (s-1)^{a_1}L(s,\chi_T). $$
\end{definition}

Following Ono in \cite{Ono}, 
for any finite set of places $S$ which contains $S_\iy$ 
we define $T_A(S) = \prod_{v \in S} T_v \prod_{v \notin S} T_v(\CO_v)$. 
Then the \emph{adelic group} $T_A$ is the inductive limit of $T_A(S)$ with respect to $S$. 
The \emph{group of $S$-units} is $T_K(S) = T_K \cap T_A(S)$. 
The \emph{group of units} of $T$ is then $T_K(S_\iy)$ where $S_\iy$ is the set of archimedean places, 
which are the elements of $\G$ composed with the absolute norm $|\cdot|_\iy$.

\begin{theorem} \label{Shyr's generalization of Dirichlet Unit Theorem} 
Shyr's generalization of Dirichlet Unit Theorem - see \cite{Shyr2} \\
The group $T_K(S)$ is a direct product of the finite group $T_K \cap T_A^c$ where: $T_A^c = \prod_v T_v(\CO_v)$ 
and a group isomorphic to $\BZ^{r(S)-r}$ where: $r(S) = \sum_{v \in S} r_v$. 
\end{theorem}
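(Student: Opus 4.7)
The plan is to follow the classical Dirichlet strategy adapted to an algebraic torus. The key tool is a ``logarithmic'' map whose kernel identifies the torsion part $T_K \cap T_A^c$ and whose image is shown to be a full lattice in a suitable Euclidean subspace, thereby producing the direct product decomposition.

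First, for each $v \in S$ I would construct a local logarithm $\log_v \colon T_v(K_v) \to \BR^{r_v}$ with kernel $T_v(\CO_v)$. For archimedean $v$ this comes from composing each $K_v$-rational character of $T$ with $\log |\cdot|_v$; for a finite place $v \in S$ one uses $\chi \mapsto \ord_v(\chi(x))$, which descends to the free part of $T_v(K_v)/T_v(\CO_v)$. Summing over $v \in S$ produces
\[
L \colon T_K(S) \longrightarrow \bigoplus_{v \in S} \BR^{r_v}
\]
of total target dimension $r(S)$.

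Next I would analyse kernel and image. The kernel of $L$ is $T_K(S) \cap \prod_{v \in S} T_v(\CO_v) = T_K \cap T_A^c$, since components outside $S$ already lie in $T_v(\CO_v)$ by the definition of $T_K(S)$; this is a discrete subgroup of the compact group $T_A^c$, hence finite. Applying the product formula to each $K$-rational character of $T$ shows that the image of $L$ lies in the codimension-$r$ subspace cut out by the conditions coming from $X^\bullet(T)_K \otimes \BR$, where $r$ is the rank of $X^\bullet(T)_K$.

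The crux, and the main obstacle, is proving that $L(T_K(S))$ is cocompact -- equivalently a full lattice of rank $r(S)-r$ -- in this subspace. Here I would invoke the compactness of the norm-one adelic quotient of a torus (due to Ono, see \cite{Ono}) together with the finiteness of the class number $h_T$, so that $T_A$ is covered by finitely many translates of $T_A(S) \cdot T_K$; a standard Minkowski-style volume argument then forces $L(T_K(S))$ to be discrete and cocompact. Since the quotient $L(T_K(S))$ is free abelian, the short exact sequence
\[
1 \longrightarrow T_K \cap T_A^c \longrightarrow T_K(S) \xrightarrow{L} L(T_K(S)) \longrightarrow 0
\]
splits, giving the required direct product decomposition.
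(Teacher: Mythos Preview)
The paper does not give a proof of this theorem; it is stated with the citation ``see \cite{Shyr2}'' and used as a black box in the subsequent discussion of Ono's and Shyr's invariants. So there is no proof in the paper to compare against.

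Your sketch is the natural adaptation of the classical Dirichlet argument to a torus, and it is in outline correct and in line with how this result is established in the literature (including Shyr's original note). Two small remarks. First, for a nonarchimedean $v \in S$ the map $x \mapsto (\ord_v(\chi_i(x)))_i$ already has kernel exactly $T_v(\CO_v)$ by Ono's very definition of that group, so there is no need to ``descend to the free part'': the quotient $T_v(K_v)/T_v(\CO_v)$ embeds in $\BZ^{r_v}$ and is itself free of rank $r_v$. Second, the cocompactness step is the only place with real content, and invoking Ono's compactness of $T_A^1/T_K$ (together with finiteness of $h_T$) is indeed the right ingredient; however, in a complete proof you should make the passage from that compactness to the fact that $L(T_K(S))$ is a lattice of full rank $r(S)-r$ explicit, rather than alluding to a Minkowski-style argument. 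Concretely, one shows that $T_A(S)/T_K(S)$ surjects onto a subgroup of finite index in $T_A/T_K$ modulo a compact piece, and then reads off the rank of the image of $L$ from the rank of $T_A(S)/T_A^c$, which is $r(S)$, minus the $r$ relations imposed by the product formula on the $K$-rational characters.
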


Let $T$ be a $\BQ$-torus and let $r_{\BQ} = rank(X^\bullet(T)_\BQ), \ r_\iy = rank(X^\bullet(T)_\BR)$. 
Let $\{\xi_i\}_{i=1}^{r_\iy}$  be a $\BZ$-basis of $X^\bullet(T)_\BR$ 
such that $\{\xi_i\}_{i=1}^{r_{\BQ}}$ is a $\BZ$-basis of $X^\bullet(T)_\BQ$. 
Then the group of units $T_K(S_{\infty})$ is decomposed into $W \times E$ 
where $W$ is finite and $E \cong \BZ^{r_\iy-r_\BQ}$. 
We denote $w_T = |W|.$

\begin{definition} 
Let $\{ e_j \}_{j=r_\BQ+1}^{r_\iy}$ be a $\BZ$-basis of $E$. 
The number $R_T = |\det(\ln|\xi_i(e_j)|_{r_\BQ+1 \leq j \leq r_{\iy}})|$ 
is called the \emph{regulator} of $T$ over $\BQ$. 
Geometrically, this number represents (as for number fields) the volume of the fundamental domain 
for the free part of group of units. 
\end{definition}

We set:
$ T_A^1 = \left \{ x \in T_A \ : \ \chi(x) \in I_K^1  \ \ \forall \chi \in X^\bullet(T)_K \right \} $
where $I_K^1 = \{a \in I_K \ : \ ||a||=1 \}$ and $I_K$ denotes the idele group. 
It is the maximal subgroup of $T(A_K)$ such that $T_A^1 / T(K)$ is compact. 

\begin{definition}
The \emph{class number} of $T$ is the finite index:

\begin{equation*}
h_T = 
\left \{ \begin{array}{l l}
\, [T_A:T_K \cdot T_A^{S_\iy}] \ \ \text{case(N)} \\ 
\\
\, [T_A^1:T_K \cdot T_A^{S_\iy}] \ \ \text{case(F)} \\
\end{array}\right.
\end{equation*}

\end{definition}

The global measure is obtained by the infinite product of these local measures, 
multiplied by a set of convergence local factors, 
namely, the Artin local $L$-functions $L_\fp(1,\chi_T)$: 
$$ \tau = \rho_T^{-1}|\Delta_K|^{-\fc{d}{2}} \prod_{v|\iy} \om_v \prod_\fp L_\fp(1,\chi_T)\om_\fp $$
This is called the \emph{Tamagawa measure}. 
This measure applied to $\prod_\fp T_\fp(\CO_\fp)$ is convergent --
almost all primes are unramified on which $L_\fp(1,\chi_T) \cdot \om_\fp(T_\fp(\CO_\fp)) = 1$.

\subsection{The quasi-discriminant definition}

The geometrical meaning of a global field discriminant 
is the volume of a fundamental domain of its ring of integers. 
An analogue for the case of an algebraic torus $T$ defined over a global field, 
called the ``\emph{quasi-discriminant}" of $T$, 
is the volume -- with respect to some normalized Haar measure -- 
of the fundamental domain of the maximal compact subgroup of $T(A_K)/T(K)$. 
In the next Section we refer to two similar analogues, 
given by T.~Ono and J.~M.~Shyr, to this invariant in the case of algebraic tori.

\subsubsection{Ono's invariant}

The following construction can be found in \cite{Ono}. 
Let $\{ \chi_i \}_{i=1}^{r_K}$ be a $\BZ$-basis of $X^\bullet(T)_K$. 
Consider the epimorphism $\psi:T_A \rightarrow \BR_+^{r_K}$ defined by 
$$ \a \mapsto (\ln|\chi_i(\a)|)_{1 \leq i \leq r_K}. $$
It yields the isomorphism $T_A/T_A^1 \cong \BR_+^{r_K}$. 
For a Haar measure $d$ defined on $T_A$, consider the decomposition
$$ d(T_A/T_K) = d(T_A/T_A^1) \cdot d(T_A^1/T_K). $$ 
Let $t_K$ be the pullback of the Haar measure $\frac{dx_1 \cdot \cdot \cdot dx_r}{x_1 \cdot \cdot \cdot x_r}$  
on $\BR_+^{r_K}$ from $T_A/T_A^1$.
Define the \emph{normalized} Haar measure $\Omega_T$ on $T_A$, 
i.e., such that $\int_{T_A^1/T_K} d(\Omega_T/t_K) =1$.

\begin{definition} 
Comparing the measure $\Omega_T/t_K$ with the Tamagawa measure gives the constant 
$$ c_T^\Ono = \frac{\om_T}{\Omega_T/d_K} $$ 
\end{definition}

Now assume $K=\BQ$. Let $\{ \xi_i \}_{i=1}^{r_\iy}$ be a $\BZ$-basis of $X^\bullet(T)_\BR$ 
such that $\{ \xi_i \}_{i=1}^{r_\BQ}$ is a $\BZ$-basis of $X^\bullet(T)_\BQ$. 
Define $ \Phi_0 : T_\BR \rightarrow \BR^{r_\iy}$ by 
$$ x \mapsto (\ln|\xi_i(x)|)_{1 \leq i \leq r_{\iy}}.$$ 
By the unit Theorem, $\text{rank}(U) = r_\iy - r$. 
Let $\{e_j\}_{j=r_\BQ+1}^{r_\iy}$ be a $\BZ$-basis of $E$ and consider the parallelotope:
$$ P_0 = \left \{ \sum_{j=r_\BQ+1}^{r_\iy} \lambda_j \Phi_0(e_j) \ : \ 0 \leq \lambda_j \leq 1 \right \}.$$ 
This is the fundamental domain of $E$ on $\BR^{r_\iy-r_\BQ}$ 
and its Euclidean volume is the regulator $R_T$.  
This domain may be extended to dimension $r_\iy$ 
by the cube $(1 \leq \lambda_j \leq e)_{1 \leq j \leq r_\BQ}$. 
The extended embedding $\Phi: T_A^{S_\iy} \to \BR^{r_\iy}$ defined by $\Phi(x) = \Phi_0(x_\iy)$ 
(ignoring the non-archimedean places components), 
gives rise to a parallelotope $P$ by a continuation of a unit cube on $\BR^{r_\iy}$. 
Thus the Euclidean volume of $P$ on $\BR^{r_\iy}$ remains $R_T$.


\begin{lem} \emph{(\cite[3.8.5]{Ono})}
$ \int_{\Phi^{-1}(P)} d(\Omega_T/t_\BQ) = w_T / h_T. $
\end{lem}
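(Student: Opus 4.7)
The plan is to identify $\Phi^{-1}(P)$, modulo the action of the torsion units $W$, with a fundamental domain for the lattice $E$ of free units acting on a suitable slice of $T_A^{S_\iy}$, and then extract the value $w_T/h_T$ from the definition of $h_T$ together with the normalization of $\Omega_T/t_\BQ$.

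First I would unwind the product structure $P = [1,e]^{r_\BQ} \times P_0$ inside $\BR^{r_\iy}$. A choice of section of $\psi \colon T_A \to \BR_+^{r_\BQ}$ decomposes $T_A \cong T_A^1 \times \BR_+^{r_\BQ}$ and gives $\Omega_T = (\Omega_T/t_\BQ) \otimes t_\BQ$. Under this decomposition $\Phi^{-1}(P)$ is a product whose $\BR_+^{r_\BQ}$-factor $[1,e]^{r_\BQ}$ has $t_\BQ$-volume $\prod_{j=1}^{r_\BQ}\int_1^e dx/x = 1$. Hence the integral reduces to the $(\Omega_T/t_\BQ)$-measure of the $T_A^1$-slice $A^1 := \Phi^{-1}(P_0) \cap (T_A^{S_\iy} \cap T_A^1)$.

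Next I would show that $A^1$ is a fundamental domain for $E$ on $T_A^{S_\iy} \cap T_A^1$ and a $w_T$-fold cover of a fundamental domain for the whole unit group $T_K(S_\iy) = W \times E$. The first assertion holds because $P_0$ is by construction the fundamental parallelotope of the lattice $\Phi_0(E)$ in $\{0\}^{r_\BQ} \times \BR^{r_\iy - r_\BQ}$, and $\Phi$ restricted to $T_A^{S_\iy}\cap T_A^1$ surjects onto this affine subspace. The second follows because $W \subset \ker(\Phi_0)$ (torsion elements have vanishing logarithmic absolute values), so $A^1$ is $W$-invariant and each $E$-orbit in $A^1$ meets a $W$-orbit of cardinality $w_T$.

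Finally I would apply the definition of $h_T$. Since $\psi(T_A^{S_\iy}) = \BR_+^{r_\BQ}$, we have $h_T = [T_A : T_K \cdot T_A^{S_\iy}] = [T_A^1 : T_K \cdot (T_A^{S_\iy} \cap T_A^1)]$, and from the normalization $(\Omega_T/t_\BQ)(T_A^1/T_K) = 1$ it follows that a fundamental domain of $T_K(S_\iy)$ inside $T_A^{S_\iy} \cap T_A^1$ has $(\Omega_T/t_\BQ)$-measure $1/h_T$. Multiplying by $w_T$ to account for the $W$-cover yields $(\Omega_T/t_\BQ)(A^1) = w_T/h_T$, proving the lemma. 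The main obstacle is the careful measure-theoretic bookkeeping --- verifying the product compatibility $\Omega_T = (\Omega_T/t_\BQ) \otimes t_\BQ$ under the section, confirming that $A^1$ is a single fundamental domain for $E$ (and not a higher multiple cover), and checking the $w_T$-fold multiplicity in the $W$-direction; once these are in place, the conclusion is an application of Fubini combined with the definitions of $h_T$ and $w_T$.
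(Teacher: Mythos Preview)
The paper does not supply its own proof of this lemma: it is stated with a citation to \cite[3.8.5]{Ono} and used immediately afterward in the computation of $c_T^{\Ono}$. There is therefore no argument in the paper to compare your proposal against.

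That said, your sketch is a faithful reconstruction of the standard argument behind Ono's result. The splitting $P=[1,e]^{r_\BQ}\times P_0$ matched with the section $T_A\cong T_A^1\times\BR_+^{r_\BQ}$ correctly isolates a factor of $t_\BQ$-volume $1$, reducing the problem to the $(\Omega_T/t_\BQ)$-measure of $\Phi^{-1}(P_0)\cap T_A^{S_\iy}\cap T_A^1$. Your identification of this set as a fundamental domain for $E$ (hence a $w_T$-fold cover of a fundamental domain for $T_K(S_\iy)=W\times E$) is correct, as is the passage from $h_T=[T_A:T_K\cdot T_A^{S_\iy}]$ to $h_T=[T_A^1:T_K\cdot(T_A^{S_\iy}\cap T_A^1)]$ using $T_K\subset T_A^1$ and the surjectivity of $\psi|_{T_A^{S_\iy}}$. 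The caveats you list (compatibility of the product measure under the section, single-sheetedness for $E$, $w_T$-multiplicity for $W$) are exactly the points Ono checks; none of them fails here.
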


Denote by $I$ the unit cube in $\BR^{r_\iy}$. 
Define by $M_\iy$ the $\iy$ component of $\Phi^{-1}(I)$, i.e.,
$$ M_\iy = \{ x \in T_\BR \ : \ 1 \leq |\xi_i(x)|_\iy \leq e, \ 1 \leq i \leq r_\iy \}. $$
The regulator, being the Euclidean volume of $P$ on $\BR^{r_\iy}$, is obtained by
$$ \fc{ \int_{\Phi^{-1}(P)} d(\Om_T/t_\BQ) }{ \int_{\Phi^{-1}(I)} d(\Om_T/t_\BQ) } $$ 
hence we get
\begin{align*} 
   c_T^{Ono} & = \fc{\om_T}{\Om_T/t_\BQ} (\Phi^{-1}(P)) = \fc{R_T \om_T(\Phi^{-1}(I))} {\Om_T/t_{\BQ}(\Phi^{-1}(P))} 
               = \fc{R_T \int_{M_\iy} \om_\iy \prod _\fp L_\fp(1,\chi_T)\om_\fp }{w_T/h_T} \\
             & = \fc{R_T h_T}{w_T} \int_{M_\iy} \om_\iy \prod _\fp \int_{T_\fp(\CO_\fp)} L_\fp(1,\chi_T)\om_\fp. 
\end{align*}

According to Theorem \ref{Shyr's generalization of Dirichlet Unit Theorem}, 
the group of units of $T$ defined over any number field $K$ has a decomposition $T_\BZ \cong W \times E$  
on which the group $E$ is isomorphic to $\BZ^{r_\iy-r_K}$ and $|W| = w_T$. 
Hence Ono's result can be generalized to an algebraic torus defined over any number field $K$, 
by including the discriminant which is different from 1 in the general case. 
Recall that 
$$ \om_T = |\Delta_k|^{-d/2} \om_\iy \prod\limits_\fp L_\fp(1,\chi_T)\om_\fp. $$
Hence
\begin{align} \label{generalized Ono invariant}
c_T^\Ono &= \fc{\om_T}{\Om_T/t_K} (\Phi^{-1}(P)) = \fc{R_T \om_T(\Phi^{-1}(I))} {\Om_T/t_K(\Phi^{-1}(P))} 
          = \fc{R_T |\Delta_k|^{-d/2} \int_{M_\iy} \om_\iy \prod _\fp L_\fp(1,\chi_T)\om_\fp }{w_T/h_T} \\
\nonumber         &= \fc{R_T h_T}{w_T} |\Delta_k|^{-d/2} \int_{M_\iy} \om_\iy \prod _\fp \int_{T_\fp(\CO_\fp)} L_\fp(1,\chi_T)\om_\fp. 
\end{align}

\begin{remark}
This generalization is not required in case (F) on which the group of units of $T$ has no free part, 
and thus has no regulator. 
Indeed, Ono's result in that case is not restricted to $\BF_q(x)$ being the analogue of $\BQ$, 
and Ono's formula does include the discriminant there. 
We will write this formula explicitly in Section (\ref{Quasi-Discriminant of Tori defined over Algebraic Function Fields}). 
\end{remark}

\subsubsection{Shyr's invariant} \label{Shyr approach}

J. M. Shyr in \cite{Shyr1} gave a similar definition to the one of Ono in the case of algebraic $\BQ$-tori. 
Consider the decomposition of a local measure $d$ into $d T_v(K_v) = d(T_v(K_v)/T_v(\CO_v)) \cdot d T_v(\CO_v)$. 
The measure $\nu_v$ is defined by the pullback of the 
measure $\fc{dx_1 \cdot \cdot \cdot dx_{r_v}}{x_1 \cdot \cdot \cdot x_{r_v}}$ on $\BR_+^{r_v}$ 
(and the canonical discrete measure on $\BZ^{r_\fp}$, respectively) for $T_v(K_v)/T_v(\CO_v)$, 
matched together with the normalized Haar measure on $T_v(\CO_v)$. 
Then the measure $\nu_T$ is defined by the infinite product $\prod_v \nu_v$ and $\nu_T(T_A/T_K) = \nu_T (T_A/T_A^c)$. 
In this new construction, the other arithmetic invariants are taken from Ono's definition. 
Explicitly global Shyr invariant is computed by:
\begin{equation*} \label{Shyr invariant computation in number fields}
c_T^\Shyr = \int_{M_\iy} \om_\iy \prod_\fp \int_{T_\fp(\CO_\fp)} L_\fp(1,\chi_T) \om_\fp. 
\end{equation*} 
Shyr obtained a relation between the Haar measures: $\tau_T = \rho_T^{-1} c_T^{\Shyr} \nu_T$. 
This led him to a formula reflecting the relation between the other arithmetic invariants of $T$ 
which can be viewed as a torus analogue of the class number formula, namely:
\begin{equation} \label{Tori class number formula}
c_T^\Shyr = \fc{ \rho_T \tau_T w_T}{h_T R_T}. 
\end{equation}

As we have done in (\ref{generalized Ono invariant}), due to the generalization of the unit Theorem, 
this result can be generalized to an algebraic torus $T$ defined over any number field $K$ 
on which the field discriminant may not be $1$. 
In that general case we would get
\begin{equation} \label{global Shyr invariant}
c_T^\Shyr = \om_T(\Phi^{-1}(I))
            = |\Delta_K|^{-d/2} \int_{M_\iy} \om_\iy \prod_\fp \int_{T_\fp(\CO_\fp)} L_\fp(1,\chi_T) \om_\fp. 
\end{equation} 

The number $D_T = 1/ (c_T^\Shyr)^2$ is called the \emph{quasi-discriminant} of $T$ over $K$.

\subsection{Main Theorem}

With the above notations, using our previous local results, 
namely, Corollaries \ref{local Shyr invariant computation} 
and \ref{local Shyr invariant using cocharacter group}, 
we now get to the following computation of the Shyr invariant 
as appears in (\ref{global Shyr invariant}) with the relation in (\ref{Tori class number formula}).   

\begin{theorem} \label{Theorem1}
For any prime $\fp$ of $K$, let $F_\fp$ be the local Frobenius automorphism, 
let $X_\bullet(T_\fp)$ be the cocharacter group of $T_\fp = T \otimes K_\fp$ 
and let $X_\bullet(T_\fp)_{I_\fp}$ be its coinvariants factor group. 
Then:
$$ c_T^\Shyr = |\Delta_K|^{-d/2} C_\iy \prod\limits_\fp \left| \ker(1-F_\fp|X_\bullet(T_\fp)_{I_\fp})_\tor \right| 
             = \fc{\rho_T \tau_T w_T }{ h_T R_T }. $$
where
\begin{equation*}
C_\iy = \left \{ \begin{array}{l l}
\int_{M_\iy} \om_\iy & \text{case(N)} \\ \\
(\ln q)^{-r_K}       & \text{case(F)} \\
\end{array}\right.
\end{equation*}
and $\Delta_K = q^{2g-2}, \ R_T=1$ in case(F).
\end{theorem}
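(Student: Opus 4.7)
The plan is to start from the integral representation of $c_T^\Shyr$ established in equation (\ref{global Shyr invariant}) and collapse each factor using the local results already proved. Writing out
$$ c_T^\Shyr = |\Delta_K|^{-d/2} \int_{M_\iy} \om_\iy \prod_\fp \int_{T_\fp(\CO_\fp)} L_\fp(1,\chi_{T_\fp}) \om_\fp, $$
the left-hand equality of the theorem amounts to evaluating each local factor at the finite primes and packaging the archimedean contribution as a single constant $C_\iy$. The right-hand equality is just Shyr's relation (\ref{Tori class number formula}) (and its generalization to an arbitrary global field, which was indicated in the passage leading to (\ref{global Shyr invariant})), so nothing new needs to be argued there.

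For each finite prime $\fp$, I would identify $T_\fp(\CO_\fp)$ with $X_\sm(\CO_\fp)$ (using Remark \ref{properties of standard model} and the fact that smoothening preserves $\CO_\fp$-points, noted right before Corollary \ref{local Shyr invariant computation}) and then apply Corollary \ref{local Shyr invariant computation} to conclude that
$$ \int_{T_\fp(\CO_\fp)} L_\fp(1,\chi_{T_\fp}) \om_\fp = L_\fp(1,\chi_{T_\fp}) \cdot \om_\fp(X_\sm(\CO_\fp)) = |\Phi_{T_\fp}(k_\fp)_\tor|. $$
Next, Corollary \ref{local Shyr invariant using cocharacter group} rewrites this as $|\ker(1-F_\fp\mid X_\bullet(T_\fp)_{I_\fp})_\tor|$, which is precisely the local factor appearing in the Main Theorem. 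One should also remark that almost all $\fp$ are unramified with $T_\fp$ of good reduction, in which case the inertia acts trivially, $1-F_\fp$ is invertible on the torsion-free lattice $X_\bullet(T_\fp)$ modulo the finite-order contributions, and the local factor equals $1$; this guarantees convergence of the infinite product, consistent with the convergence of the Tamagawa product.

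The archimedean/infinite-place part is where the two global cases diverge and is the main obstacle. In case (N) one simply sets $C_\iy = \int_{M_\iy} \om_\iy$, reproducing the definition of $M_\iy$ from Subsection \ref{Shyr approach}. In case (F) there is no archimedean integral, the regulator is $1$, and the group of units of $T$ has no free part in the sense used for number fields; here I would recompute the pullback $\nu_v$ at each ``infinite'' place $v$ of the function field by its defining recipe, noting that the measure $\frac{dx_1\cdots dx_{r_v}}{x_1\cdots x_{r_v}}$ degenerates to the counting measure on $\BZ^{r_v}$ scaled by $\ln q$ coming from the valuation having value group $\BZ$ rather than $\BR$. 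Summing the exponents $r_v$ over the infinite places of $K$ yields $r_K$, and the accumulated normalization factor becomes $(\ln q)^{-r_K}$; this is the constant $C_\iy$ for case (F). Combining the three pieces produces the displayed equality, and invoking Shyr's formula closes the chain.

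The main technical obstacle is thus not at the finite primes (the heavy lifting for those was done in Sections 2 and 3) but in carefully matching the normalizations at the infinite places in case (F): one needs to track how the factor $\ln q$ enters each place and how the Dirichlet-type unit decomposition of Theorem \ref{Shyr's generalization of Dirichlet Unit Theorem} interacts with the pullback measure, so that the replacement $\int_{M_\iy}\om_\iy \rightsquigarrow (\ln q)^{-r_K}$ is truly the analogue of the archimedean integral and not merely a formal substitute.
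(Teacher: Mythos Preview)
Your approach at the finite primes and in case (N) is exactly the paper's: start from (\ref{global Shyr invariant}), replace each local factor via Corollaries \ref{local Shyr invariant computation} and \ref{local Shyr invariant using cocharacter group}, and cite (\ref{Tori class number formula}) for the right-hand equality. The paper's proof says nothing more than this.

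For case (F), however, your derivation of $C_\iy=(\ln q)^{-r_K}$ does not go through as written. In the paper's conventions $S_\iy$ is \emph{empty} in the function-field case (there are no archimedean places and no places are designated ``infinite''), so there is nothing to sum $r_v$ over; and in any event $\sum_{v\in S} r_v = r(S)$, which is not the $K$-rank $r_K=\operatorname{rank} X^\bullet(T)_K$ appearing in the exponent. The factor $(\ln q)^{-r_K}$ does not arise place by place; it comes from the \emph{global} quotient $T_A/T_A^1$, which in case (F) is free of rank $r_K$ and is given the measure $(\ln q)^{r_K}$ (see the Appendix). The paper obtains (\ref{Shyr global invariant definition}) directly from the decomposition of $T_A/T_K$ together with the observation that $T_A^{S_\iy}=\prod_\fp T_\fp(\CO_\fp)$ when $S_\iy=\emptyset$, and only then applies the local corollaries. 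Your intuition that the $\ln q$ enters because the value groups are $\BZ$ rather than $\BR$ is correct, but the bookkeeping happens at the level of $T_A/T_A^1$, not at individual ``infinite'' places.
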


\bk

\section{Appendix: Tori defined over algebraic function fields} 
\label{Quasi-Discriminant of Tori defined over Algebraic Function Fields}

Let $X$ be a smooth, projective and irreducible algebraic curve of genus $g$ defined over $\BF_q$  
and let $Y$ be a finite Galois cover of $X$. 
Let $K = \BF_q(X)$ and $L = \BF_q(Y)$ be the corresponding fields of rational functions. 
Then $L/K$ is a finite Galois extension with $\G = Gal(L/K)$. 
Let $T \in \C(L/K)$ be an algebraic torus of dimension $d$. 

\bk

Just as for number fields, let $\om_\fp$ be a normalized invariant form on $K_\fp$, 
i.e., such that $\om_p(\CO_\fp) = 1$. 
Since $\deg \fp = (\CO_\fp / \fp : \BF_q)$, we have $|\CO_\fp / \fp| = q^{\deg \fp}$. 
Hence the normalization condition implies $\om_\fp(\fp) = q^{-\deg \fp}$. 
The infinite product $\prod_\fp \om_\fp$ 
which is multiplied by the set $L_\fp(1,\chi_T,L/K)$ as a system of correcting factors, 
induces the Tamagawa measure on $T$ (see \cite[\S~2.2]{Weil}):
$$ \om_T = q^{-(g-1)d} \prod_\fp L_\fp (1,\chi_T) \om_ \fp, \ \ d = \dim T. $$

Define the normalized measure $\Om_T$ on $T_A$ by the condition $ \int_{T_A^1/T_K} d\Om_T = 1. $ \\
Then $\om_T = (\ln q)^{r_K} c_T^\Ono \Om_T$,  reflecting the decomposition
$$ d(T_A/T_K) = d(T_A/T_A^1) \cdot d(T_A^1/T_K) $$
on which $T_A/T_A^1$ is given the measure $(\ln q)^{r_K}$ (see \cite[\S~3.2]{Ono}). 
Hence in this case we get the relation:
\begin{equation} \label{Ono relation}
 \tau_T = \fc{c_T^\Ono }{ \rho_T } 
        = \fc{h_T \prod_\fp \int_{T_\fp(\CO_\fp)} L_\fp(1,\chi_T) \om_\fp }{ w_T \rho_T (\ln q)^{r_K} q^{d(g-1)} }. 
\end{equation}

As in Shyr's approach, consider the decomposition
$$ T_A/T_K \cong \left( T_A/T_A^1 \right) \times \left( T_A^1 / T_A^{S_\iy}T_K \right) \times \left( T_A^{S_\iy}T_K / T_K \right). $$
By the same construction as in (\ref{Shyr approach}), $\mu_T|_{T_A/T_A^1} = t_K$ and therefore:
$$ \rho_T^{-1} \om_T (T_A/T_K) = t_K(T_A/T_A^1) \cdot \tau_T = \fc{\mu_T(T_A/T_K)}{\mu_T(T_A/T_A^1)} \cdot \tau_T. $$
Now, as we gave the measure $(\ln q)^{r_K}$ to each point in $T_A/T_A^1$, we get
$$ \rho_T^{-1} \om_T (T_A/T_K) = \tau_T \cdot \fc{\mu_T(T_A/T_K)}{(\ln q)^{r_K}}. $$ 
Thus over $T_A/T_K$ we have $\rho_T^{-1} \om_T \cdot (\ln q)^{r_K} = \tau_T \mu_T $. \\
Since here there are no archimedean places, we have $T_A^{S_\iy} = \prod_\fp T_\fp(\CO_\fp) = T_A^{1,S_\iy}$. \\
Now $h_T = (T_A^1:T_A^{S_\iy}T_K)$ and $T_A^{S_\iy}T_K / T_K \cong T_A^{S_\iy} / W$ where $W$ is finite. \\
Thus $c_T^\Shyr = \fc{\om_T}{\nu_T}(T_A^{S_\iy})$. 
But: $T_A^{S_\iy} \subset T_A^1$ and so: $\nu_T(T_A^{S_\iy}) = 1 $. 
Hence
\begin{equation} \label{Shyr global invariant definition}
 c_T^\Shyr = \fc{\om_T}{\nu_T} (T_A^{S_\iy}) 
           = \fc{ (\ln q)^{-r_K} }{ q^{d(g-1)} } \prod\limits_\fp L_\fp(1, \chi_T) \om_\fp(T_\fp(\CO_\fp)). 
\end{equation}
From the equation above $\tau_T = \rho_T^{-1} c_T^\Shyr \nu_T$, we get the following relation, 
which can be viewed as a \emph{class number formula analogue} 
for algebraic tori defined over function fields:
\begin{equation} \label{class number formula analogue}
c_T^\Shyr = \fc{\om_T}{\nu_T} (T_A^{S_\iy}) = \rho_T \tau_T \mu_T (T_A^{S_\iy}) = \fc{\rho_T \tau_T w_T }{ h_T }. 
\end{equation}
Note that this result is none other than formula (\ref{Ono relation}) obtained by T. Ono (see \cite[3.8.10']{Ono}). 



\bk

\end{document}